\theoremstyle{plain}
\newtheorem{definition}{Definition}
\newtheorem{corollary}{Corollary}
\newtheorem{theorem}{Theorem} 
\newtheorem{lemma}{Lemma}
\newtheorem{remark}{Remark}
\newtheorem{proposition} {Proposition} 
\newtheorem*{proposition-non} {Proposition} 
\newtheorem*{theorem-non}{Theorem}
\newtheorem{assumption}{Assumption}
\newtheorem{assumptions}{Assumptions}
\numberwithin{theorem}{section} 
\numberwithin{proposition}{section} 
\numberwithin{corollary}{section} 
\numberwithin{assumptions}{section} 
\numberwithin{assumption}{section} 
\numberwithin{lemma}{section}
\numberwithin{definition}{section}
\numberwithin{remark}{section}
\numberwithin{algorithm}{section}
\newcommand{\Var}[2][]{\operatorname{Var}_{#1}\left[#2\right]}
\newcommand{\abs}[1]{\left\lvert#1\right\rvert}
\newcommand{\Norm}[1]{\left\lVert#1\right\rVert}
\DeclareMathOperator*{\argmin}{argmin}   
 \newcommand\omicron{o}
\DeclarePairedDelimiter{\diagfences}{(}{)}
\newcommand{\diag}{\operatorname{diag}\diagfences}
\date{}
\begin{document}

\author{Panagiotis Lolas $^{*1}$}
\thanks{$^*$panagd@stanford.edu}

\author{Lexing Ying$ \; ^{\dagger 1}$}
\thanks{$^{\dagger}$lexing@stanford.edu}
\thanks{$^{1}$ Department of Mathematics, Stanford University}

\title{Shrinkage Estimation of Functions of Large Noisy Symmetric Matrices}

\maketitle

\begin{abstract}
  We study the problem of estimating functions of a large symmetric matrix $A$ when we only have
  access to a noisy estimate $\hat{A}_n=A_n+\sigma Z_n/\sqrt{n}.$ We are interested
  in the case that $Z_n$ is a Wigner ensemble and suggest an algorithm based on nonlinear shrinkage of
  the eigenvalues of $\hat{A}_n.$ As an intermediate step we explain how recovery of the spectrum of
  $A_n$ is possible using only the spectrum of $\hat{A}_n$. Our algorithm has important applications,
  for example, in solving high-dimensional noisy systems of equations or symmetric matrix
  denoising. Throughout our analysis we rely on tools from random matrix theory.
\end{abstract}

\section{Introduction}\label{sec:intro}

\subsection{Problem and Assumptions}\label{subsec:assum}
Let $A_n\in\mathbb{R}^{n\times n}$ be a real symmetric matrix (deterministic or random), which is
unknown. Instead, we have access to a noisy estimate $\hat{A}_n = A_n + \sigma n^{-1/2}Z_n.$ We will
often omit the subscript $n$ in our notation. We will denote by $\lambda_1\geq \lambda_2\geq \cdots
\geq \lambda_n$ the eigenvalues of $A_n,$ $w_1,\cdots,w_n$ the corresponding eigenvectors and the
empirical spectral distribution of $A_n$ by $\mu_n.$ The latter is the measure $\mu_n =
n^{-1}\sum_{k=1}^n\delta_{\lambda_k}.$ Similarly we are going to denote by
$\hat{\lambda}_1\geq\cdots\geq\hat{\lambda}_n$ the eigenvalues of $\hat{A}_n$ and
$\hat{w}_1,\cdots,\hat{w}_n$ the corresponding eigenvectors.

\begin{assumptions}\label{assume:1}
  We assume that $A_n$ and $Z_n$ satisfy the following assumptions.
  \begin{enumerate}
  \item The dimension $n$ of the matrix $A_n$ goes to infinity. 
  \item The spectral distribution of the eigenvalues of $A_n$ converges weakly almost surely to a
    deterministic probability measure $H.$
  \item The measure $H$ is supported on a compact interval contained in $\mathbb{R}$ and eventually
    all of the eigenvalues of $A$ lie in a compact subset $[h_1,h_2]$ of $\mathbb{R}.$
  \item The matrix $Z_n$ is real symmetric and independent of $A_n.$ The matrix $Z_n$ is a submatrix of an
    infinite matrix $(Z_{ij})_{1\leq i,j\leq n}$ whose upper half has i.i.d. entries with mean 0,
    variance $1$ and finite fourth moments. 
\end{enumerate}

\end{assumptions}

We are interested in estimating $h(A_n),$ where $h$ is a continuous function defined on an open set that contains $[h_1,h_2]$. Special cases include, for example, $h(x)=x$ (which is the problem of denoising $\hat{A}_n$), or $h(x)=x^{-1},$ which is interesting for solving noisy linear systems of equations. Other interesting choices might include $h(x)=\sqrt{x}$ (estimating the square root of a positive semi-definite matrix), or $h(x)=x/(x^2+\lambda^2)$ (for estimating the regularized inverse of a symmetric matrix).

\subsection{Our Contributions}

The main contributions of our paper are listed below:

\begin{enumerate}
    \item We derive (in closed form) the optimal nonlinear shrinkage for estimating $h(A_n)$ in Frobenius loss. 
    \item We suggest a practical algorithm that asymptotically estimates the optimal nonlinear shrinkage for any choice of function $h.$
    \item We study the problem of recovering the limiting spectral distribution $H$ of the matrix $A_n.$ We consider the cases of known and unknown noise level $\sigma^2.$ Recovering the measure $H$ is important for the implementation of our algorithm.    \item We show how our results can be used to derive the optimal  shrinkage function with alternative choices of losses.
    \item We study asymptotic expansions of the optimal shrinkers when $\sigma\rightarrow 0$ and $\sigma\rightarrow\infty.$

\end{enumerate}

\subsection{Related Work}

Shrinkage methods have been used in statistics in different settings with great success. In \cite{james_stein} the authors showed how estimation of the mean of a Gaussian distribution in more than 2 dimensions can be improved significantly by shrinkage of the sample estimates. For the purpose of covariance matrix estimation, linear shrinkage methods were used in \cite{ledoit_linear} to suggest a well-conditioned estimator of a high-dimensional covariance matrix. Using tools from random matrix theory, in \cite{ledoit2012nonlinear} the authors showed how nonlinear shrinkage methods can be used to greatly improve estimation and a nonparametric procedure that achieves greater speed and numerical stability was suggested in \cite{ledoit2020analytical}. For the case of spiked models, \cite{donoho_gavish_johnstone} used nonlinear shrinkage to estimate the population covariance matrix and derived the optimal shrinker for 26 losses, for most of them in closed form. For regularization of linear discriminant analysis, general nonlinear eigenvalue shrinkage was used in \cite{lolas2020regularization} to improve the classification accuracy when the feature dimensionality is comparable to the number of samples and sharp classification error asymptotics for any shrinkage function were derived. 

For the case of a deformed Wigner model as the one we consider here, \cite{donoho_gavish_smd} showed how eigenvalue shrinkage can be used for symmetric matrix denoising in the case that $A$ is low-rank. For the problem considered here, $h(x)=x$ was studied by \cite{bouchaudrotational}, where the authors derived the optimal nonlinear shrinkage in closed form using replica symmetry. In that case the authors showed that, given $\sigma,$ the optimal shrinker depends on $H$ only through the Stieltjes transform of the limiting spectral distribution of $\hat{A}_n.$ This phenomenon makes the optimal shrinkage function easy to estimate (for example, with a similar nonparametric procedure as in \cite{ledoit2020analytical}).

The problem of numerical computation of the free-convolution of two probability measures has been
studied in \cite{rao_edelman}, \cite{rao_numerical}. The inverse problem, namely spectrum recovery
(which we study for the deformed Wigner case in Section \ref{sec:recovery}), has been well-studied
for covariance matrices. In \cite{karoui_spectrum} a convex optimization approach was used to
recover population spectra from samples. In \cite{kong2017spectrum}, the authors used a moment
method that works even in the sublinear regime where the dimension of the covariance matrix is much
larger than the number of samples. \cite{ledoit2015spectrum} used an approach that exploits the
natural discreteness of population spectra and suggested solving a nonlinear optimization problem
which essentially matches the empirical eigenvalues to the quantiles of the Marcenko-Pastur
distribution. The idea of natural discreteness of the population spectrum will also be useful for
the case of additive free-convolution with a semicircular distribution that we consider here.

Finally, from a Bayesian perspective shrinkage methods have been considered in other settings. In a
closely related problem in \cite{lexing_opaug} the authors suggested a Bayesian shrinkage method to
solve noisy elliptic systems of equations. For the case of covariance matrix estimation, linear
shrinkage is motivated by imposing am inverse Wishart prior, while other more sophisticated priors
give rise to nonlinear shrinkage methods (\cite{bergerprior},\cite{berger2020bayesian}).

\subsection{Organization of the Paper}

In Section \ref{sec:rmt} we review some well-known results from random matrix theory and present a
new result about trace functionals that involve both $A_n$ and $\hat{A}_n.$ These are going to be
the essential tools that we will need for the rest of the paper. In Section \ref{sec:algo} we derive
the oracle nonlinear shrinkage estimators for general continuous functions of $A$ and asymptotic
equivalents that are amenable to estimation. We also suggest an algorithm to perform asymptotically
optimal nonlinear shrinkage, when $H,\sigma$ are known. Section \ref{sec:recovery} considers the
problem of recovering $H,\sigma.$ Firstly, we show how $H$ can be recovered, given $\sigma,$ using a
nonlinear optimization problem and provide theoretical guarantees for consistency. We then explain
how $\sigma$ can be consistently estimated for a class of probability measures $H.$ In Section
\ref{sec:asympt_expand} we study asymptotic expansions of the shrinkers and the losses when
$\sigma\rightarrow 0$ and $\sigma\rightarrow\infty.$ Simulations and numerical experiments are
presented in Section \ref{sec:numerical}. Finally, Section \ref{sec:proofs} presents the complete
proofs of our results.

\section{Almost Sure Limits for a Class of Trace Functionals}\label{sec:rmt}

In this section we present some useful tools from random matrix theory. We start by introducing our
notation and stating well-known theorems. After that, we provide some new results about asymptotics
of trace functionals that include both $A$ and $\hat{A}$ which will be essential for justifying the
main algorithm in Section \ref{sec:algo}.

For a probability measure $\mu$ supported on the real line we will denote its Stieltjes transform by
$m_{\mu}(z)=\int (x-z)^{-1}\mu(dx),z\in\mathbb{C}^+.$ We will often omit the measure from the
subscript and just write $m(z),$ provided that it is clear which measure we are referring to. We
have the following well-known result, the so-called Wigner semicircle law (\cite{wigner_original}). 

\begin{theorem}[Theorem 2.4.2 in \cite{tao2012topics}]
Let $\left(M_{ij}\right)_{1\leq i,j}$ be mean 0, variance 1 real random variables such that $M_{ij}
= M_{ji}$ and $\left(M_{ij}\right)_{i<j}$ are independent and identically distributed. Then, the
spectral distribution of the sequence of random matrices
$n^{-1/2}M_n=\left(n^{-1/2}M_{ij}\right)_{1\leq i,j\leq n}$ converges weakly almost surely to the
Wigner semicircular distribution:
$$\mu_{sc} = \frac{\sqrt{(4-x^2)_+}}{2\pi}dx.$$
\end{theorem}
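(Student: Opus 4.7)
The plan is to use the moment method, which is the standard approach in Tao's book and which gives both weak convergence of the expected empirical spectral distribution and, via concentration, almost sure weak convergence. The target moments are those of the semicircle law, namely $\int x^{2k}\,d\mu_{sc}(x)=C_k$ (the $k$-th Catalan number) and $\int x^{2k+1}\,d\mu_{sc}(x)=0$. Since $\mu_{sc}$ is compactly supported, Carleman's condition holds and weak convergence is implied by convergence of all moments.

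First I would reduce the problem to a combinatorial one. For fixed $k$, write
\[
\EE{\tfrac{1}{n}\operatorname{tr}\bigl((n^{-1/2}M_n)^{k}\bigr)}
=\frac{1}{n^{1+k/2}}\sum_{i_1,\dots,i_k=1}^{n}\EE{M_{i_1 i_2}M_{i_2 i_3}\cdots M_{i_k i_1}}.
\]
Each term corresponds to a closed walk of length $k$ on the vertex set $\{1,\dots,n\}$, and by independence the expectation vanishes unless every edge appears at least twice. Counting vertices versus edges in the associated multigraph then shows that only walks that use exactly $k/2$ distinct edges, each traversed exactly twice, contribute asymptotically; in particular odd moments vanish in the limit. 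For even $k=2m$, the surviving walks correspond to plane trees on $m+1$ vertices, which are enumerated by $C_m$, matching the moments of $\mu_{sc}$. The finite-fourth-moment hypothesis is more than enough to control the contribution of edges traversed three or more times; the variance hypothesis fixes the scale of the contributing pair terms.

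To upgrade from convergence of expected moments to almost sure weak convergence, I would bound
\[
\Var{\tfrac{1}{n}\operatorname{tr}\bigl((n^{-1/2}M_n)^{k}\bigr)} = O(n^{-2})
\]
by a similar graph-theoretic analysis applied to pairs of closed walks, where connectedness of the combined walk graph forces a loss of at least two powers of $n$. Combined with Chebyshev's inequality and the Borel--Cantelli lemma, this yields almost sure convergence of each moment. Because the limit distribution has bounded support, the method of moments then promotes this to almost sure weak convergence of $\mu_{n^{-1/2}M_n}$ to $\mu_{sc}$.

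The main obstacle is the combinatorial bookkeeping in step two: carefully classifying closed walks by their associated multigraphs and showing that the only contributors in the $n\to\infty$ limit are the noncrossing pair partitions indexed by plane trees. Secondary care is needed to ensure that the ``rare'' walks with high-multiplicity edges, while individually producing larger moment factors, are too few in number to survive the $n^{-(1+k/2)}$ normalization; here the finite-fourth-moment assumption supplies the required moment bound on $|M_{ij}|^{r}$ for the relevant $r$, via a standard truncation and centering argument if one wishes to reduce to the bounded case.
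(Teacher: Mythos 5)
The paper does not prove this statement: it is quoted verbatim as Theorem~2.4.2 from Tao's \emph{Topics in Random Matrix Theory} and used as a black box, so there is no ``paper proof'' to compare against. Your sketch is the standard moment method, which is in fact exactly the route taken in the cited source (reduce to trace moments, classify closed walks by their multigraphs, identify the surviving contributions with noncrossing pairings/plane trees enumerated by Catalan numbers, then upgrade to almost sure convergence via an $O(n^{-2})$ variance bound and Borel--Cantelli, with truncation to bridge from bounded entries to the general second-moment hypothesis). One small mismatch worth flagging: you invoke ``the finite-fourth-moment hypothesis,'' but the theorem as stated assumes only mean $0$ and variance $1$ (it is the paper's own Assumption~1.1(4) on $Z_n$, not this theorem, that imposes finite fourth moments). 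For the stated theorem you must lean entirely on the truncation-and-centering step you mention at the end; that step is genuinely load-bearing, not optional, since without it the mixed-moment expectations in the walk sum are not even guaranteed to exist. With that caveat the sketch is correct and is essentially the argument in the reference.
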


The above result gives the limiting spectral distribution of Wigner matrices. For the case of a
deformed Wigner matrix, such as $\hat{A}_n=A_n+\sigma n^{-1/2}Z_n,$ we have under the Assumptions \ref{assume:1} in
Subsection \ref{subsec:assum}:  

\begin{proposition}\label{prop:free_add_semi}
The matrix $\hat{A}_n$ has a limiting spectral distribution $\hat{\mu}$, which is a deterministic probability
measure with Stieltjes transform $m_{\hat{\mu}}(z)$ that satisfies: 
$$m_{\hat{\mu}}(z)=\int \frac{dH(t)}{t-z-\sigma^2m_{\hat{\mu}}(z)}.$$
\end{proposition}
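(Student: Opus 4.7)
The plan is to recognize this as the classical description of the spectral distribution of a deformed Wigner matrix, namely the free additive convolution of $H$ with the semicircle law of variance $\sigma^2$, and then translate the $R$-transform identity into a fixed-point equation for the Stieltjes transform in the paper's sign convention. There are two natural routes: a free-probabilistic one via $R$-transforms, and a direct resolvent/cavity computation. I would go with the $R$-transform route because the Wigner semicircle has a particularly simple $R$-transform, but I would have the resolvent argument in mind as a backup for handling the finite-fourth-moment (non-Gaussian) assumption.

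First I would verify that Voiculescu's asymptotic freeness applies in our setting: under Assumptions \ref{assume:1}, $A_n$ is independent of the Wigner matrix $\sigma n^{-1/2}Z_n$, the empirical spectral distribution of $A_n$ converges weakly a.s.\ to $H$ supported in $[h_1,h_2]$, and $\sigma n^{-1/2}Z_n$ has a uniformly (eventually) bounded spectral norm and converges in spectrum to the semicircle of variance $\sigma^2$. By the standard asymptotic-freeness statement for Wigner matrices against a deterministic (or independent) sequence of bounded self-adjoint matrices, the pair $(A_n,\sigma n^{-1/2}Z_n)$ is asymptotically free a.s., and the ESD of $\hat{A}_n = A_n+\sigma n^{-1/2}Z_n$ converges weakly a.s.\ to the deterministic measure $\hat{\mu} := H \boxplus \mu_{sc,\sigma^2}$.

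Next I would compute the $R$-transform. The semicircle of variance $\sigma^2$ has $R$-transform $R_{sc,\sigma^2}(w)=\sigma^2 w$, and free additivity gives $R_{\hat{\mu}}(w) = R_H(w) + \sigma^2 w$. Translating back to the Stieltjes transform requires care with signs because the paper uses $m_\mu(z)=\int (x-z)^{-1} d\mu(x)$, which is the negative of the standard Cauchy transform $G^{\mathrm{std}}_\mu$. Using $G^{\mathrm{std}}(z)=-m(z)$ and the defining relation $R_\mu(w)+1/w = (G^{\mathrm{std}}_\mu)^{-1}(w)$, I would apply this to $\mu=\hat{\mu}$ with $w = -m_{\hat{\mu}}(z)$. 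Substituting $R_{\hat{\mu}}(w)=R_H(w)+\sigma^2 w$ and using the same identity for $H$ gives $-m_{\hat{\mu}}(z)=G^{\mathrm{std}}_H\!\bigl(z+\sigma^2 m_{\hat{\mu}}(z)\bigr)$, i.e.
$$m_{\hat{\mu}}(z) \;=\; \int \frac{dH(t)}{t - z - \sigma^2 m_{\hat{\mu}}(z)},$$
which is the claimed fixed-point equation.

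The main obstacle I expect is not the algebra but justifying asymptotic freeness and almost sure convergence under only finite fourth moments, rather than Gaussian entries. The standard way around this is a truncation-plus-moment argument: truncate the entries of $Z_n$ at a slowly growing level (using the finite fourth moment to control the error in operator norm and in trace moments), compare the trace moments $n^{-1}\operatorname{tr}(p(A_n,\hat{A}_n))$ to their free-probabilistic counterparts via the classical Wigner-matrix combinatorics of non-crossing pair partitions, and upgrade convergence in expectation to almost sure convergence by a variance estimate (e.g.\ a martingale/Burkholder bound on $n^{-1}\operatorname{tr}(\hat{A}_n-zI)^{-1}$). These are standard ingredients that I would cite rather than reproduce, so that the core of the proof is the three-line free-convolution identification above.
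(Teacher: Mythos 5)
The paper does not actually prove Proposition~\ref{prop:free_add_semi}: it is stated as a known result and attributed to the literature on free additive convolution with a semicircular distribution (the citation to \cite{semi_free} immediately after the statement), and is then used as a given in the proof of Theorem~\ref{thm:basic_tool}, which subsumes it as the $h\equiv 1$ case. So there is no internal proof to compare against; what you have written is a reconstruction of the standard argument.

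That reconstruction is essentially sound. The identification $\hat{\mu}=H\boxplus\mu_{sc,\sigma^2}$ via asymptotic freeness, followed by the $R$-transform computation, is the canonical route, and your sign bookkeeping (translating between the paper's $m_\mu(z)=\int(x-z)^{-1}d\mu(x)$ and the Cauchy transform $G_\mu=-m_\mu$) is correct: it gives $-m_{\hat\mu}(z)=G_H(z+\sigma^2 m_{\hat\mu}(z))$, i.e.\ $m_{\hat\mu}(z)=\int (t-z-\sigma^2 m_{\hat\mu}(z))^{-1}dH(t)$. Two remarks worth noting. First, the pure $R$-transform route is really a moment-method argument, so it does not directly handle the finite-fourth-moment hypothesis; you flag this and propose truncation plus a variance estimate, which is exactly the mechanism the paper itself deploys in Step~2 of the proof of Theorem~\ref{thm:basic_tool}, so your backup plan is consistent with what the authors would accept. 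Second, asymptotic freeness against a \emph{random} $A_n$ should be handled by conditioning on $A_n$ (legitimate by the assumed independence) and invoking the asymptotic-freeness theorem for a Wigner ensemble against a deterministic uniformly bounded sequence (Theorem~20 in \cite{mingo2017free}, which the paper cites for the same purpose); your proposal implicitly does this, but it is worth making explicit, since ``asymptotic freeness a.s.'' is a statement about the conditional law. Finally, the resolvent/Pastur route you mention as a backup is the more robust one under weak moment conditions and directly yields both existence and uniqueness of $m_{\hat\mu}$ as the unique solution of the self-consistent equation with $\operatorname{Im} m_{\hat\mu}(z)>0$ on $\mathbb{C}^+$; your free-probability route establishes existence (as the transform of $H\boxplus\mu_{sc,\sigma^2}$) but does not discuss why the fixed-point equation has a unique Stieltjes-transform solution, which is a small gap if the proposition is to be used as written.
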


This is the formula that describes the free additive
convolution $H\boxplus
\rho_{sc;\sigma^2}$ of a measure with a semicircular distribution (\cite{semi_free}). If $H=\delta_0,$ we
can solve for the Stieltjes transform $m_{\hat{\mu}}(z)$ in closed form and then use Stieltjes inversion to
recover the Wigner law. 

The first main contribution of this paper is to extend this result in the following theorem, which
is analogous to the results in \cite{ledoit2011eigenvectors} for the case of covariance matrices. As
in the case of covariance matrices, when \cite{ledoit2012nonlinear} used it to estimate a covariance
matrix using nonlinear shrinkage, this is going to be the main tool for theoretically justifying our
algorithms. In \cite{bouchaudrotational} a similar calculation is done using using replica symmetry
for matrices corrupted by orthogonally invariant noise.

\begin{theorem}\label{thm:basic_tool}
With the same assumptions as in Section \ref{sec:intro} we have for any $z\in\mathbb{C}^+$
\[
\frac{tr\left(h(A)\left(\hat{A}-z\right)^{-1}\right)}{n}\xrightarrow{a.s.}\int
\frac{h(t)dH(t)}{t-z-\sigma^2m_{\hat{\mu}}(z)}.
\]
Here, $m_{\hat{\mu}}(z)$ is the Stieltjes transform of the free additive convolution of $H$ with a
semicircular distribution of variance $\sigma^2,$ as in Proposition \ref{prop:free_add_semi}.
\end{theorem}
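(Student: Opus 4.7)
The plan is to adapt the Bai--Silverstein Schur-complement approach, tracking the weight $h(\lambda_i)$ through the trace. The claim is effectively an ``$h$-anisotropic'' companion of Proposition~\ref{prop:free_add_semi}: the diagonal of the resolvent in the eigenbasis of $A$ should converge to $(\lambda_i - z - \sigma^2 m_{\hat\mu}(z))^{-1}$.

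First I would diagonalize $A = U\Lambda U^T$ with $\Lambda = \operatorname{diag}(\lambda_1,\ldots,\lambda_n)$. Cyclicity of the trace gives
\[
\frac{1}{n}\,tr\bigl(h(A)(\hat A - z)^{-1}\bigr) = \frac{1}{n}\sum_{i=1}^n h(\lambda_i)\,\tilde G_{ii}(z), \quad \tilde G(z) := \bigl(\Lambda + \sigma n^{-1/2}\tilde Z - z\bigr)^{-1},\ \tilde Z := U^T Z U,
\]
and the Schur complement formula yields
\[
\tilde G_{ii}(z) = \frac{1}{\lambda_i + \sigma n^{-1/2}\tilde Z_{ii} - z - \sigma^2 n^{-1}\tilde z_i^{\,T}\tilde G^{(i)}(z)\tilde z_i},
\]
with $\tilde z_i$ the $i$-th column of $\tilde Z$ with the $i$-th entry removed and $\tilde G^{(i)}$ the resolvent of the corresponding $(n-1)\times(n-1)$ minor.

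The key analytic step is to show the random denominator converges to $\lambda_i - z - \sigma^2 m_{\hat\mu}(z)$. A Marcinkiewicz/Bai--Silverstein-type concentration inequality gives $\operatorname{Var}(v^T M v) \le C\|M\|_F^2$ for any mean-zero, identity-covariance vector $v$ with uniformly bounded fourth moments of linear functionals; applied with $M = \tilde G^{(i)}(z)$ (where $\|M\|_F^2 \le n/(\operatorname{Im} z)^2$), this yields $\sigma^2 n^{-1}\tilde z_i^{\,T}\tilde G^{(i)}(z)\tilde z_i - \sigma^2 n^{-1}\,tr\,\tilde G^{(i)}(z) \xrightarrow{a.s.} 0$. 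Combining this with the rank-one interlacing bound $|n^{-1}\,tr\,\tilde G^{(i)}(z) - n^{-1}\,tr\,\tilde G(z)| \le (n\operatorname{Im} z)^{-1}$ and with $n^{-1}\,tr\,\tilde G(z) \xrightarrow{a.s.} m_{\hat\mu}(z)$ from Proposition~\ref{prop:free_add_semi} identifies the denominator limit. Substituting back, the $h$-weighted average becomes
\[
\frac{1}{n}\sum_{i=1}^n\frac{h(\lambda_i)}{\lambda_i - z - \sigma^2 m_{\hat\mu}(z)} + o_{a.s.}(1) \xrightarrow{a.s.} \int\frac{h(t)\,dH(t)}{t - z - \sigma^2 m_{\hat\mu}(z)},
\]
using the a.s.\ weak convergence $\mu_n \to H$ and the boundedness/continuity of the integrand on $[h_1,h_2]$ (guaranteed by $\operatorname{Im}(z + \sigma^2 m_{\hat\mu}(z)) > 0$).

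The hard part is that when $Z$ is non-Gaussian, $\tilde Z = U^T Z U$ does not have i.i.d.\ entries, and $\tilde z_i$ is not strictly independent of $\tilde G^{(i)}$, so the off-the-shelf Wigner-matrix lemmas do not apply verbatim. The saving grace is that what the proof really uses--mean zero, identity covariance, and uniformly bounded fourth moments of linear functionals of $\tilde z_i$--is preserved by the orthogonal conjugation, and the conditional version of the quadratic-form inequality continues to hold. An alternative strategy, closer in spirit to the replica calculation of \cite{bouchaudrotational}, would forgo the rotation entirely and derive a perturbed subordination equation directly via an integration-by-parts/cumulant expansion applied to $\mathbb{E}[Z_{ij}G_{kl}]$, closing it by a Gr\"onwall-type estimate.
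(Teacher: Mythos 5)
Your Gaussian (GOE) step is essentially the paper's Step~1 and is fine: rotation invariance lets you take $A$ diagonal, Schur complement plus quadratic-form concentration plus rank-one interlacing identifies the denominator with $\lambda_i - z - \sigma^2 m_{\hat\mu}(z)$, and the $h$-weighted sum converges by weak convergence of $\mu_n \to H$. However, the universality step is where the proposal has a genuine gap. You conjugate $Z$ by the eigenvector matrix $U$ of $A$ and then try to run the same Schur-complement concentration on $\tilde Z = U^T Z U$. For non-Gaussian $Z$ this does not work: the column $\tilde z_i$ is a function of the \emph{entire} matrix $Z$ (not just its $i$-th row/column), so it is not independent of the minor resolvent $\tilde G^{(i)}$, the entries of $\tilde z_i$ are not independent of one another, and they do not satisfy a ``mean zero, identity covariance, bounded fourth moments of linear functionals, independent of $\tilde G^{(i)}$'' hypothesis of any quadratic-form variance lemma in the Bai--Silverstein/Erd\H{o}s--Yau toolkit. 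Asserting that ``the conditional version of the quadratic-form inequality continues to hold'' is exactly the statement that would need proof, and it is false as stated: conditioning on $\tilde G^{(i)}$ biases $\tilde z_i$ in a way that the lemma does not tolerate.

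The paper avoids this problem by taking a different route for universality. After the GOE case, it (i) truncates and recenters the entries of $Z$ to reduce to a Wigner ensemble with bounded entries, controlling the resolvent perturbation via $|tr(M_1 M_2)| \le \sqrt{n}\,\|M_1\|_{op}\|M_2\|_F$ and a Borel--Cantelli argument on $\|Z - Z^{(M)}\|_F$; then (ii) reduces $h$ to a polynomial, expands $tr(A^k \hat A^m)$ into mixed monomials in $A$ and $n^{-1/2}Z$, and invokes almost-sure asymptotic freeness of $A$ and an independent Wigner matrix (Theorem~20 in \cite{mingo2017free}) to conclude that each monomial trace has a limit determined only by $H$ and the semicircle law, hence independent of the law of $Z$ beyond its first two moments. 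Your secondary suggestion --- a cumulant expansion of $\mathbb{E}[Z_{ij}G_{kl}]$ closed by a self-consistent/Gr\"onwall bound --- is a legitimate alternative and is closer to local-law technology, but you do not carry it out, so as written the proposal still has no valid universality argument. To repair the proposal you should either adopt the truncation-plus-asymptotic-freeness route, or actually execute the cumulant expansion and show the subordination equation is stable; the rotation-then-concentration path should be dropped for the non-Gaussian case.
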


Although the theorem above was stated for a function $h$ that is continuous, it can be extended to cases with finitely many discontinuities which are not on atoms of the measure $H.$ In that case, taking $h(t)=\mathbb{I}_{[a,b]}$ gives the asymptotic overlap of the eigenvectors of $A,\hat{A},$ which the authors in \cite{bouchaudrotational} derived.

\section{Main Results}\label{sec:algo}

In this section, we motivate and present the main algorithm of the paper. We start by deriving an
oracle estimator that optimally approximates $h(A)$ among all rotationally invariant estimators. We
also find the optimal shrinker in closed form using the results from Section \ref{sec:rmt}. After
that, we explain how universality, namely the fact that in the large $n$ limit the distribution of
the noise does not affect the asymptotics we are interested in, can be used to simulate
approximately the optimally shrunk eigenvalues.

\subsection{Optimal Rotation Invariant Estimator}\label{subsec:ORIE}
We consider the spectral decomposition of $\hat{A},$ which has eigenvalues $\hat{\lambda}_1\geq
\cdots\geq \hat{\lambda}_n:$
\[
\hat{A} =\hat{W}\hat{\Lambda}\hat{W}^\intercal.
\]
For a continuous function $h$, an estimator $\Psi(\hat{A})$ of $h(A)$ is rotationally invariant if
$\Psi(O\hat{A}O^\intercal)=O\Psi(\hat{A})O^\intercal$ for any $n\times n$ orthogonal matrix $O.$
Searching for a rotationally invariant estimator of $h(A)$ seems reasonable, if we do not have any
prior information about the eigenstructure of $A.$ If such information was available, we might be
able to exploit it by approaching the problem in a Bayesian way. With that in mind, it also seems
reasonable to consider $\Psi(\hat{A})$ with the same eigenvectors as $\hat{A},$ such that
$\Psi(\hat{A})=\hat{W}{D}\hat{W}^\intercal.$ We are interested in choosing $\Psi$ to minimize the
Frobenius loss $\Norm{\Psi(\hat{A}) - h(A)}_F^2.$ We observe
that
\[
\Norm{\Psi(\hat{A})-h(A)}_F^2=\Norm{D - \hat{W}^\intercal h(A)\hat{W}}_F^2,
\]
which is minimized when
\begin{equation}\label{eq:oracle_d}
  D^{(h)}=(d_1^{(h)},\cdots,d_n^{(h)}) = \diag{\hat{W}^\intercal h(A)\hat{W}  }.
\end{equation} These clearly depends on the
unknown matrix $A$ and is not straightforward to estimate from the data. In the case $h(x) = x$ the
authors in \cite{bouchaudrotational}, \cite{bouchaud_book} show that the oracle quantities can be
asymptotically approximated by deterministic quantities that depend only on the limiting spectral
distribution of $\hat{A}$ and the noise $\sigma.$ The authors call this remarkable phenomenon the
large dimension miracle. It makes the oracle quantities amenable to estimation, for example via
kernel estimation. However, such a miracle does not seem very likely in the case of a general $h$
(and it is not entirely clear how to extend to the case of unknown $\sigma$). For example, already
for $h(x) = x^{-1},$ we will see that the optimal shrinkage is given by
\[
f_{1/t}^*(x) \equiv \frac{x+\sigma^2\int t^{-1}dH(t)}{(x+\sigma^2u(x))^2+\sigma^4v(x)^2},
\]
where $u(x)$ and $v(x)$ are the real and imaginary parts of $\lim _{\eta\rightarrow 0^+}m_{\hat{\mu}}({x+i\eta})$, i.e.,
$u(x)+iv(x)=\lim _{\eta\rightarrow 0^+}m_{\hat{\mu}}({x+i\eta})$. This already requires estimating
$m_H(0) = \int t^{-1}dH(t)$ and it is not hard to see that for other functions the situation can get
even more complicated.

\begin{definition}
For a continuous function $h$ on an open interval that contains the support of ${H},$ we define the functions $u_h,v_h:\mathbb{R}\rightarrow \mathbb{R}$ by $$u_h(x) + i v_h(x) = \lim_{\eta\downarrow 0} \int \frac{h(t)dH(t)}{t - x - i\eta -\sigma^2m_{\hat{\mu}}(x+i\eta)}.$$
\end{definition}

\begin{remark}
  The limit above exists because $\lim_{\eta\downarrow 0}m_{\hat{\mu}}(x+i\eta)$ exists (\cite{semi_free}).
\end{remark}

Below derive the optimal shrinker for a general continuous function $h.$ 
\begin{theorem}\label{thm:opt_shrinkage_der}
  Among all bounded continuous functions $f$ on an open interval containing
  $supp(H\boxplus\rho_{sc;\sigma^2})$ and the eigenvalues of $\hat{A}_n,$ the minimizer $f_h^*(x)$
  of the asymptotic quantity
  \[
  \lim_{n\rightarrow\infty}n^{-1}\Norm{f(\hat{A}_n)-h(A_n)}_F^2
  \]
  is given by $f_h^*(x) = v_h(x)/v(x)$ for $x\in supp(H\boxplus\rho_{sc;\sigma^2}).$
\end{theorem}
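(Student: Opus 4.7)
The strategy is to expand the Frobenius loss into three trace terms, take the $n\to\infty$ limit of each one, and then minimize the resulting functional in $f$ pointwise.

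Expanding,
\[
\frac{1}{n}\Norm{f(\hat A_n)-h(A_n)}_F^2 \;=\; \frac{1}{n}\operatorname{tr}(f(\hat A)^2) \;-\; \frac{2}{n}\operatorname{tr}\!\p{f(\hat A)\,h(A)} \;+\; \frac{1}{n}\operatorname{tr}(h(A)^2).
\]
The last term converges almost surely to $\int h(t)^2\,dH(t)$ and is constant in $f$. The first term converges almost surely to $\int f(x)^2\,d\hat\mu(x)$ by weak convergence of the empirical spectral distribution of $\hat A$ to $\hat\mu$ and uniform boundedness of $f$ on a neighborhood of the support. By standard Stieltjes inversion, $\hat\mu$ has a continuous density $v(x)/\pi$ on (the interior of) $\operatorname{supp}(H\boxplus\rho_{sc;\sigma^2})$, so this contribution becomes $\pi^{-1}\int f(x)^2 v(x)\,dx$.

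The nontrivial step is the cross term. I would introduce the signed overlap measure
\[
\nu_h^{(n)} \;=\; \frac{1}{n}\sum_{k=1}^n \p{\hat w_k^\intercal h(A)\,\hat w_k}\,\delta_{\hat\lambda_k},
\]
whose total variation is bounded by $\sup_{[h_1,h_2]}|h|$ and which satisfies $\int \phi\,d\nu_h^{(n)} = n^{-1}\operatorname{tr}(h(A)\phi(\hat A))$ for any bounded Borel $\phi$. Its Cauchy transform is exactly $n^{-1}\operatorname{tr}(h(A)(\hat A-z)^{-1})$, which by Theorem \ref{thm:basic_tool} converges almost surely, for each $z\in\mathbb{C}^+$, to $\int h(t)\,dH(t)/(t-z-\sigma^2 m_{\hat\mu}(z))$. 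Taking $z = x+i\eta$ and sending $\eta\downarrow 0$, the definitions of $u_h,v_h$ identify the boundary values and, via Stieltjes inversion for signed measures with bounded total variation, show that $\nu_h^{(n)}$ converges weakly (a.s.) to the absolutely continuous measure with density $v_h(x)/\pi$. Hence $n^{-1}\operatorname{tr}(h(A)f(\hat A))\to \pi^{-1}\int f(x) v_h(x)\,dx$ for any bounded continuous $f$.

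Combining the three limits,
\[
\lim_{n\to\infty}\frac{1}{n}\Norm{f(\hat A)-h(A)}_F^2 \;=\; \frac{1}{\pi}\int f(x)^2 v(x)\,dx - \frac{2}{\pi}\int f(x) v_h(x)\,dx + \int h(t)^2\,dH(t).
\]
On $\operatorname{supp}(H\boxplus\rho_{sc;\sigma^2})$ the integrand $f(x)^2 v(x) - 2 f(x) v_h(x)$ is minimized pointwise at $f(x)=v_h(x)/v(x)$, while off the support $v\equiv 0$ and $v_h\equiv 0$ so the value of $f$ does not affect the limit. This gives the stated minimizer.

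The main obstacle is the Stieltjes-inversion step: one must verify that convergence of the Cauchy transforms on $\mathbb{C}^+$ can be pushed to boundary values and then to integration against arbitrary bounded continuous test functions $f$. This requires controlling the boundary behavior of $v_h/v$ (continuity on the bulk and bounded limits at the edges, using the square-root edge behavior of the free convolution) and ruling out atoms at the endpoints. All other steps are standard consequences of weak convergence and the trace-functional asymptotics in Theorem \ref{thm:basic_tool}.
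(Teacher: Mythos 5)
Your proposal is correct and follows essentially the same route as the paper: expand the Frobenius loss into three traces, invoke Theorem \ref{thm:basic_tool} for the cross term, and minimize the resulting functional pointwise in $f$. The only technical difference is how the cross-term limit $n^{-1}\operatorname{tr}\!\p{h(A)f(\hat A)}\to\pi^{-1}\int f\,v_h$ is justified: the paper establishes it first for analytic $f$ via Cauchy's integral formula on a contour enclosing the spectra (showing uniform convergence of $R_n(z)$ there) and then extends to continuous $f$ by density, whereas you argue via Stieltjes inversion of the signed overlap measure $\nu_h^{(n)}$ directly — both are valid, and you correctly flag the passage to boundary values as the main step requiring care.
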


According to Theorem \ref{thm:basic_tool}, the measure
$n^{-1}\sum_{i=1}^nd_i^{(h)}\delta_{\hat{\lambda}_i}$ converges weakly almost surely to a measure
with density $\pi^{-1}v_h.$ This suggests that the asymptotic analog of the oracle quantities
$d_i^{(h)}$ is the quantity $v_h/v$ derived above. As an immediate corollary of Theorem
\ref{thm:opt_shrinkage_der} we have the following:

\begin{corollary}\label{corol:some_closed}
  \begin{enumerate}
  \item For the choice $h(t)=t$,
    \begin{equation}
      u_{t}(x) + i v_t(x) = \lim _{z=x+i\eta,\eta\downarrow 0}1+zm_{\hat{\mu}}(z)+\sigma^2m_{\hat{\mu}}(z)^2.
    \end{equation}
    This gives the optimal shrinkage function $f^*_t(x)=x+2\sigma^2u(x)$.
    
  \item For the choice $h(t)=1/t$,
    \begin{equation}
      u_{1/t}(x) + i v_{1/t}(x) = \lim _{z=x+i\eta,\eta\downarrow 0}\frac{m_{\hat{\mu}}(z)-m_H(0)}{z+\sigma^2m_{\hat{\mu}}(z)}.
    \end{equation}
    This gives the optimal shrinkage function
    \[
    f^*_{1/t}(x)=\frac{x+\sigma^2m_H(0)}{(x+\sigma^2u)^2+\sigma^4v(x)^2}.
    \]
  \item For the choice $h(t)=t^2$,
    \begin{equation}
      \begin{split}
        &u_{t^2}(x) + i v_{t^2}(x) = \lim_{z=x+i\eta, \eta\downarrow 0}\int \frac{t^2dH(t)}{t-z-\sigma^2m_{\hat{\mu}}(z)}\\
        &=\lim _{z=x+i\eta,\eta\downarrow 0}\int tdH(t)+z+\sigma^2m_{\hat{\mu}}(z)+m_{\hat{\mu}}(z)\left[z+\sigma^2m_{\hat{\mu}}(z)\right]^2\\
        &=\int t dH(t) + x +\sigma^2(u(x)+iv(x))+(u(x)+iv(x))\left[x+\sigma^2(u(x)+iv(x))\right]^2.
    \end{split}
    \end{equation}
    This gives the optimal shrinkage function
    \[
    f^*_{t^2}(x)=\sigma^2 +(x+\sigma^2u(x))^2-\sigma^4v^2(x)+2\sigma^2 u(x)(x+\sigma^2u(x)).
    \]
  \end{enumerate}
\end{corollary}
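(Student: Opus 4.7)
The plan is to apply Theorem \ref{thm:opt_shrinkage_der} directly: the optimal shrinker equals $f_h^*(x)=v_h(x)/v(x)$, where $u_h+iv_h$ is the boundary value of $\int h(t)\,dH(t)/(t-z-\sigma^2 m_{\hat\mu}(z))$. The key observation is that the integrand always involves the shifted argument $\zeta(z):=z+\sigma^2 m_{\hat\mu}(z)$, and by Proposition \ref{prop:free_add_semi} this shift collapses the integral back onto $m_{\hat\mu}$ itself via the fixed-point identity $m_{\hat\mu}(z)=\int dH(t)/(t-\zeta(z))$. So for each of the three choices of $h$, the plan is to reduce $\int h(t)\,dH(t)/(t-\zeta(z))$ to an explicit expression in $m_{\hat\mu}(z)$ (and possibly $m_H(0)$ or $\int t\,dH(t)$), then take $\eta\downarrow 0$ with $m_{\hat\mu}(x+i\eta)\to u(x)+iv(x)$ and read off the imaginary part divided by $v(x)$.

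Concretely, the three algebraic reductions I would carry out are: (i) for $h(t)=t$, write $t/(t-\zeta)=1+\zeta/(t-\zeta)$ to get $1+\zeta(z)m_{\hat\mu}(z)=1+zm_{\hat\mu}(z)+\sigma^2 m_{\hat\mu}(z)^2$; (ii) for $h(t)=1/t$, use the partial fraction $1/(t(t-\zeta))=\zeta^{-1}\bigl(1/(t-\zeta)-1/t\bigr)$ to get $(m_{\hat\mu}(z)-m_H(0))/\zeta(z)$; (iii) for $h(t)=t^2$, polynomial-divide $t^2=(t+\zeta)(t-\zeta)+\zeta^2$ to get $\int t\,dH(t)+\zeta(z)+\zeta(z)^2 m_{\hat\mu}(z)$. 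Each of these matches the first displayed line of the three items in the corollary statement.

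The last step is a routine extraction of real and imaginary parts after substituting $m_{\hat\mu}(x+i\eta)\to u+iv$. For $h=t$, the imaginary part is $v(x+2\sigma^2 u)$, so $f_t^*=x+2\sigma^2 u$. For $h=1/t$, writing the ratio as $N/D$ with $N=(u-m_H(0))+iv$ and $D=(x+\sigma^2 u)+i\sigma^2 v$, a short computation gives $\operatorname{Im}(N/D)=v(x+\sigma^2 m_H(0))/\bigl((x+\sigma^2 u)^2+\sigma^4 v^2\bigr)$, and dividing by $v(x)$ yields the stated $f_{1/t}^*$. For $h=t^2$, expanding $(u+iv)\bigl[(x+\sigma^2 u)+i\sigma^2 v\bigr]^2$ and adding the $\sigma^2 v$ from the $\sigma^2 m_{\hat\mu}$ term gives an imaginary part of $v\bigl[\sigma^2+(x+\sigma^2 u)^2-\sigma^4 v^2+2\sigma^2 u(x+\sigma^2 u)\bigr]$, producing $f_{t^2}^*$ after division by $v$.

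There is no real obstacle here: the whole argument is a direct application of Theorem \ref{thm:opt_shrinkage_der} together with the three elementary algebraic identities above, and justification of the $\eta\downarrow 0$ limit is already guaranteed by the remark following the definition of $u_h,v_h$. The only mildly delicate point is ensuring that for $h(t)=1/t$ the measure $H$ assigns no mass at the origin (so that $m_H(0)$ and $1/t$ are well defined against $H$), which follows from the standing assumption that $\operatorname{supp}(H)\subset[h_1,h_2]$ together with the implicit positivity used in introducing $h(x)=x^{-1}$ in the introduction.
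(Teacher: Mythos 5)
Your approach is identical to the paper's: for each $h$ you use the same algebraic identity (the ``$t/(t-\zeta)=1+\zeta/(t-\zeta)$'' split, the partial fraction, and the polynomial division by $t-\zeta$) together with the fixed-point equation for $m_{\hat\mu}$ from Proposition \ref{prop:free_add_semi}, then read off imaginary parts. The only difference is that you spell out the routine real/imaginary-part bookkeeping that the paper leaves implicit; your computations check out.
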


\begin{remark}
  Using Theorem \ref{thm:opt_shrinkage_der} we can show that for estimating $A_n^k$ in Frobenius norm
  we need the first $(k-2)$ moments of the measure $H$ for $k\geq 3.$
\end{remark}

\subsubsection{Pseudoinverses and Regularized Pseudoinverses}

We study the optimal shrinkage to estimate $A\left(A^2+\lambda ^2 I_n\right)^{-1}.$ If
$\lambda\downarrow 0,$ this converges to the pseudoinverse of the matrix $A.$ Using our usual
notation we have $h=h(t;\lambda)=t/(t^2+\lambda^2).$ This gives

\begin{equation}\begin{split}
&\int \frac{h(t)dH(t)}{t-z-\sigma^2m_{\hat{\mu}}(z)}=\int \frac{tdH(t)}{(t^2+\lambda^2)(t-z-\sigma^2m_{\hat{\mu}}(z))}\\
&=\int \left[\frac{z+\sigma^2m_{\hat{\mu}}(z)}{(z+\sigma^2m_{\hat{\mu}}(z))^2+\lambda^2}\frac{1}{t-z-\sigma^2m_{\hat{\mu}}(z)}\right]dH(t)\\
&+\frac{1}{2(\lambda i - z - \sigma^2m_{\hat{\mu}}(z))}\int \frac{dH(t)}{t-\lambda i} -\frac{1}{2(\lambda i + z +\sigma^2m_{\hat{\mu}}(z))}\int \frac{dH(t)}{t+\lambda i}\\
&=\frac{m_{\hat{\mu}}(z)(z+\sigma^2m_{\hat{\mu}}(z))}{(z+\sigma^2m(z))^2+\lambda^2}+\frac{m_H(\lambda i)}{2(\lambda i - z - \sigma^2m_{\hat{\mu}}(z))}-\frac{m_H(-\lambda i)}{2(\lambda i + z +\sigma^2m_{\hat{\mu}}(z))}
\end{split}
\end{equation}
which allows us to compute the optimal shrinkage as a function of $u,v,m_{H}(\lambda i).$



For the case of the pseudoinverse of a Hermitian matrix $A,$ we examine the following scenario. We
assume that there exist fixed $\delta>0,p\in (0, 1)$ such that $A$ has $a_n$ eigenvalues equal to 0,
$n-a_n$ eigenvalues greater than $\delta$ and $a_n/n\xrightarrow{a.s.}{p}$ as $n\rightarrow \infty.$
In that case we can write $H=p\delta_0+(1-p)\nu, $ where $\nu$ is a probability measure with support
contained in $[\delta,\infty).$ Under these assumptions the pseudoinverse of $A$ can be written as a
  function $h(A),$ where $h$ is continuous on $[0,\infty],$ $h(x)=1/x$ for $x\geq \delta$ and
  $h(x)=0$ in an open set containing $0$. We find in this case the Stieljes transform of
  $H\boxplus\rho_{sc;\sigma^2}$ satisfies \begin{equation}\label{eq:pseudo_stielt}
    m_{\hat{\mu}}(z)=-\frac{p}{z+\sigma^2m_{\hat{\mu}}(z)}+(1-p)\int \frac{d\nu(t)}{t-z-\sigma^2m_{\hat{\mu}}(z)}.
\end{equation}
Using this we see that

\begin{equation}
  \begin{split}
    &\int \frac{h(t)dH(t)}{t-z-\sigma^2m_{\hat{\mu}}(z)}\\
    &=(1-p)\int \frac{d\nu(t)}{t(t-z-\sigma^2m_{\hat{\mu}}(z))}=\frac{1-p}{z+\sigma^2m_{\hat{\mu}}(z)}\int\left[ \frac{1}{t-z-\sigma^2m_{\hat{\mu}}(z)}-\frac{1}{t}\right]d\nu(t)\\
    &=\frac{1-p}{z+\sigma^2m_{\hat{\mu}}(z)}\left[\frac{m_{\hat{\mu}}(z)+\frac{p}{z+\sigma^2m_{\hat{\mu}}(z)}}{1-p}-m_{\nu}(0)\right]\\
    &=\frac{m_{\hat{\mu}}(z)}{z+\sigma^2m_{\hat{\mu}}(z)}+\frac{p}{(z+\sigma^2m_{\hat{\mu}}(z))^2}-\frac{(1-p)m_{\nu}(0)}{z+\sigma^2m_{\hat{\mu}}(z)}.
  \end{split}
\end{equation}

\begin{remark}
  \begin{enumerate}
  \item If $p=0,$ the last formula reduces to $$\frac{m_{\hat{\mu}}(z)-m_{\nu}(0)}{z+\sigma^2m_{\hat{\mu}}(z)}=\frac{m_{\hat{\mu}}(z)-m_{H}(0)}{z+\sigma^2m_{\hat{\mu}}(z)}.$$ This, as expected, agrees with Corollay \ref{corol:some_closed}.
  \item When $H=p\delta_0+(1-p)\nu,$ we have $$m_H(\lambda i)=-\frac{p}{\lambda i}+(1-p)m_{\nu}(\lambda i)$$ and $$m_H(-\lambda i)=\frac{p}{\lambda i}+(1-p)m_{\nu}(-\lambda i).$$ Using these it is straightforward to see that the optimal shrinkage for the regularized pseudoinverse converges to the optimal shrinkage for the pseudoinverse as $\lambda\downarrow 0.$
  \end{enumerate}
\end{remark}

\subsection{Monte-Carlo Nonlinear Shrinkage}

We are now going to present an algorithm to approximate the oracle quantities. Based on Theorem \ref{thm:opt_shrinkage_der}, it is natural to try to compute $H$ and then solve for $u_h,v_h$. Our algorithm does not require solving numerically the equation for
the Stieltjes transform of the additive free convolution of $H$ with a semicircular distribution,
which can be tricky (\cite{rao_numerical}). We think that the
general idea behind it is likely to be applied in more complicated cases, in particular in problems
that do not have simple formulas for the optimal shrinkage as derived in Theorem
\ref{thm:opt_shrinkage_der}. The key observation is that the asymptotic equivalents
of the oracle quantities only depend on $H$ and are universal for all noise distributions. Hence,
although $A, Z$ are unknown, it is possible to replicate the asymptotic equivalents to the oracles
using a Monte-Carlo simulation.

Suppose that we know $\sigma, \lambda_1,\cdots,\lambda_n,$ or estimates
$\Tilde{\sigma},\Tilde{\lambda}_1,\cdots,\Tilde{\lambda}_n$ of those are available. The topic of
finding suitable choices for $\Tilde{\sigma}$ and $\Tilde{\lambda}_i$ for $1\leq i \leq n$ is going
to be the topic of the next section, as suggested by Theorem \ref{thm:recovery1}. Then, we suggest
the following simple procedure in Algorithm \ref{algo:MC_shrink} for approximately optimal nonlinear
shrinkage of the eigenvalues of $\hat{A}$ to estimate $h({A})$ in Frobenius norm. The complexity of
the algorithm is $\mathcal{O}(Kn^3).$ Notice that we use the notation $GOE(n)$ for the Gaussian
Orthogonal Ensemble in $\mathbb{R}^{n\times n}$ (\cite{tao2012topics}).

\begin{algorithm}[!t]
 \caption{MC Nonlinear Shrinkage}
 \label{algo:MC_shrink}
 \begin{algorithmic}[1]
   \State Inputs: $\Tilde{\sigma}, \Tilde{\lambda}_1,\cdots,\Tilde{\lambda}_n$ and a positive integer $K.$
   
   \For{$k=1,\cdots,K$}

   \State  Generate $\hat{Z}_k\sim \Tilde{\sigma} n^{-1/2}GOE(n).$

   \State Find the eigenvectors $\hat{g}_{1,k},\cdots,\hat{g}_{n,k}$ of
   $diag(\Tilde{\lambda}_1,\cdots,\Tilde{\lambda}_n)+\hat{Z}_k$ such that $\hat{g}_{i,k}$ corresponds to
   the $i-$th largest eigenvalue.

   \State Set $\hat{d}_{i,k}=\hat{g}_{i,k}^\intercal diag(h(\Tilde{\lambda}_1),\cdots,h(\Tilde{\lambda}_n)) \hat{g}_{i,k}.$

   \EndFor

   \State Output: $d_i^*=K^{-1}\sum_{k=1}^K\hat{d}_{i,k},1\leq i\leq n.$
 \end{algorithmic}
\end{algorithm}

Algorithm \ref{algo:MC_shrink} approximates the oracle nonlinear shrinkage in the following sense.
\begin{theorem}\label{thm:mc_average}
For a bounded continuous function $h$ defined on an open set that contains the support of $H,$ let
$d_i^{(h)}$ be the oracle quantities defined in \eqref{eq:oracle_d} Subsection \ref{subsec:ORIE} and $d_{i}^*$ the
output of the MC Nonlinear Shrinkage algorithm with input
$\Tilde{\sigma},\Tilde{\lambda}_1,\cdots,\Tilde{\lambda}_n,K\geq 1.$ Assume that
$\Tilde{\sigma}\rightarrow\sigma$ and
$n^{-1}\sum_{i=1}^n\delta_{\Tilde{\lambda}_i}\xrightarrow{a.s.}H$. Then, for any $a,b\in [0,1]:$
\[
\frac{\sum_{[na]}^{[nb]} d_i^{(h)}}{n}-\frac{\sum_{[na]}^{[nb]} d_i^*}{n}\xrightarrow{a.s.}{0}.
\]
\end{theorem}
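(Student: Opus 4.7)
The strategy is to prove that the oracle partial sum $n^{-1}\sum_{i=[na]}^{[nb]} d_i^{(h)}$ and each individual Monte-Carlo partial sum $n^{-1}\sum_{i=[na]}^{[nb]} \hat d_{i,k}$ both converge almost surely to the same deterministic limit, namely $\int_{q_b}^{q_a} v_h(x)/\pi\,dx$, where $q_a \ge q_b$ are the $(1-a)$- and $(1-b)$-quantiles of $\hat\mu = H\boxplus\rho_{sc;\sigma^2}$. Averaging over the $K$ simulations then preserves this common limit, so the difference vanishes a.s. This reduces the theorem to two applications of Theorem \ref{thm:basic_tool} plus a Portmanteau-style transfer to partial sums.

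For the first application (the data side), for $z=x+i\eta\in\mathbb{C}^+$,
\[
\frac{1}{n}\sum_{i=1}^{n}\frac{d_i^{(h)}}{\hat\lambda_i - z} = \frac{\operatorname{tr}(h(A_n)(\hat A_n - z)^{-1})}{n} \xrightarrow{a.s.} u_h(x) + i v_h(x)
\]
as $\eta\downarrow 0$. Splitting $h = h_+ - h_-$ on a compact neighbourhood of $[h_1,h_2]$ and applying the classical Stieltjes inversion to each positive part, I conclude that the signed measure $\nu_n := n^{-1}\sum_i d_i^{(h)}\delta_{\hat\lambda_i}$, whose total variation is bounded by $\Norm{h}_\infty$, converges weakly almost surely to the absolutely continuous measure with density $v_h/\pi$. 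For the second application, the pair $\bigl(\operatorname{diag}(\tilde\lambda_1,\ldots,\tilde\lambda_n),\ \operatorname{diag}(\tilde\lambda_1,\ldots,\tilde\lambda_n)+\hat Z_k\bigr)$ satisfies all the hypotheses of Theorem \ref{thm:basic_tool}: $\hat Z_k$ is an independent $\tilde\sigma n^{-1/2}$-scaled GOE matrix, $\tilde\sigma\to\sigma$, and the empirical measure of $(\tilde\lambda_i)$ converges a.s.\ to $H$. Because the limit in Theorem \ref{thm:basic_tool} depends only on $H$ and $\sigma$ (the essence of universality), the analogous signed measures $n^{-1}\sum_i \hat d_{i,k}\delta_{\hat\lambda_{i,k}^\ast}$, where $\hat\lambda_{i,k}^\ast$ are the eigenvalues of the $k$-th simulated matrix, converge weakly a.s.\ to the same limit for each fixed $k$.

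To pass from weak convergence to the partial-sum statement, I use that $\hat\mu$ is absolutely continuous (a standard feature of free convolution with a semicircle), so standard empirical-quantile arguments yield $\hat\lambda_{[na]}\to q_a$ and $\hat\lambda_{[na],k}^\ast\to q_a$ a.s.\ for every $a\in[0,1]$. Writing
\[
\frac{1}{n}\sum_{i=[na]}^{[nb]} d_i^{(h)} = \nu_n\!\bigl([\hat\lambda_{[nb]},\hat\lambda_{[na]}]\bigr),
\]
the Portmanteau theorem (the boundary $\{q_a,q_b\}$ carries zero mass under $v_h/\pi\,dx$) yields a.s.\ convergence to $\int_{q_b}^{q_a} v_h(x)/\pi\,dx$. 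The identical argument applied for each $k$ gives the same limit for $n^{-1}\sum_{i=[na]}^{[nb]} \hat d_{i,k}$, and averaging in $k$ preserves the limit, completing the proof.

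The main obstacle I anticipate is the rigorous upgrade from pointwise Stieltjes-transform convergence supplied by Theorem \ref{thm:basic_tool} to weak convergence of the \emph{signed} measures $\nu_n$: the classical inversion theorems are phrased for positive measures, so the $h = h_+ - h_-$ decomposition is essential, and one must then check that the limit densities for $h_\pm$ combine into $v_h/\pi$. A secondary subtlety is obtaining the a.s.\ convergence simultaneously for \emph{all} $a,b\in[0,1]$; this is handled by first establishing it on a countable dense set, then extending by monotonicity using the continuity of $a\mapsto \int_{q_a}^{q_0} v_h(x)/\pi\,dx$, which follows from the absolute continuity of $\hat\mu$.
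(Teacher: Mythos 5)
Your proposal follows essentially the same route as the paper: apply Theorem \ref{thm:basic_tool} to conclude weak convergence of the $d$-weighted empirical measures $n^{-1}\sum_i d_i\delta_{\hat\lambda_i}$ (and the Monte-Carlo analogue, using that the inputs $\tilde\lambda_i,\tilde\sigma$ satisfy the same hypotheses) to a common limit with density $v_h/\pi$, then use absence of atoms of $H\boxplus\rho_{sc;\sigma^2}$ to transfer to partial sums via quantiles, and finally average over the $K$ replicates. You are a bit more explicit than the paper about two points it glosses over — the upgrade from pointwise Stieltjes-transform convergence to weak convergence when $h$ may change sign (your $h_+ - h_-$ decomposition) and the passage from deterministic quantiles to the random indices $[na],[nb]$ via empirical-quantile convergence — but these are refinements of the same argument rather than a different method.
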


\subsection{Different Loss Functions}

So far we have been interested in the case of Frobenius loss. For some applications other losses
might be more suitable. For this reason we shortly present how our results can be used to derive the
optimal nonlinear shrinkage for some other choices of losses. Some of the losses we consider here
(and many others) were studied for spiked covariance models in \cite{donoho_gavish_johnstone}.
Below we will be interested in the following losses:
\begin{enumerate}
    \item Stein loss: $L^{st}(A,B) = tr(A^{-1}B-I)-\log{\det{A^{-1}B}}.$
    \item Divergence Loss: $L^{div}(A,B)=tr(A^{-1}B-I)+tr(B^{-1}A-I).$
    \item The loss $L(A,B)=\Norm{A^{-1}B-I}_F^2.$
    
\end{enumerate}

\begin{proposition}\label{prop:diff_losses}
  Assume that (using the notation from the Assumptions in Section \ref{sec:intro}) $h_1>0.$ For any positive and bounded continuous function $f$ defined on an open set that eventually contains the eigenvalues of $\hat{A}$ we have almost surely:

  \begin{enumerate}
  \item   For the Stein loss $L^{st}(A,f(\hat{A}))$ we have:
  $$\lim_{n\rightarrow\infty} n^{-1} L^{st}(A,f(\hat{A}))=\int f(x)\frac{v_{1/t}(x)}{\pi}dx+\int \log t dH(t)-\int \log{f(x)}\frac{v(x)}{\pi}dx.$$ This is minimized for $f(x)=v(x)/v_{1/t}(x)=1/f_{1/t}^*(x).$
  \item For the Stein loss $L^{st}(f(\hat{A}),A)$ we have: $$\lim_{n\rightarrow\infty}n^{-1}L^{st}(f(\hat{A}),A)=\int \frac{1}{f(x)}\frac{v_t(x)}{\pi}dx+\int \log f(x)\frac{v(x)}{\pi}dx-\int \log t dH(t)-1.$$
    This is minimized for $f(x)=v_t(x)/v(x)=f_t^*(x).$
  \item For the divergence loss $L^{div}(A,f(\hat{A}))$ we have: $$\lim_{n\rightarrow\infty}n^{-1}L^{div}(A,f(\hat{A}))=\int f(x)\frac{v_{1/t}(x)}{\pi}dx+\int \frac{1}{f(x)}\frac{v_t(x)}{\pi}dx-2.$$
    This is minimized for $f(x)=\sqrt{v_t(x)/v_{1/t}(x)}=\sqrt{f_t^*(x)/f_{1/t}^*(x)}.$
    
  \item For the loss $L(A,f(\hat{A}))$ we have: $$\lim_{n\rightarrow\infty} n^{-1}L(A,f(\hat{A}))=1-2\int f(x)\frac{v_{1/t}(x)}{\pi}dx+\int f^2(x)\frac{v_{1/t^2}(x)}{\pi}dx .$$
    This is minimized for $f(x)=v_{1/t}(x)/v_{1/t^2}(x)=f_{1/t}^*(x)/f_{1/t^2}^*(x).$
  \item For the loss $L(f(\hat{A}),A)$ we have: $$\lim_{n\rightarrow\infty} n^{-1}L(f(\hat{A}),A)=1-2\int \frac{1}{f(x)}\frac{v_{t}(x)}{\pi}dx+\int \frac{1}{f^2(x)}\frac{v_{t^2}(x)}{\pi}dx .$$
    This is minimized for $f(x)=v_{t^2}(x)/v_{t}(x)=f_{t^2}^*(x)/f_{t}^*(x).$

\end{enumerate}
\end{proposition}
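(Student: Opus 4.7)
The argument rests on a single asymptotic identity obtained from Theorem \ref{thm:basic_tool} together with Stieltjes inversion: for every $h$ continuous on an open neighborhood of $[h_1,h_2]$ and every $g$ bounded continuous on an open neighborhood of $supp(H\boxplus\rho_{sc;\sigma^2})$,
\begin{equation}\label{eq:plan_star}
\frac{tr(h(A_n)g(\hat{A}_n))}{n}\xrightarrow{a.s.}\int g(x)\,\frac{v_h(x)}{\pi}\,dx.
\end{equation}
To derive \eqref{eq:plan_star}, expand the trace as $\sum_i g(\hat{\lambda}_i)\,\hat{w}_i^{\intercal}h(A)\hat{w}_i=\int g\,d\nu_n$, where $\nu_n:=n^{-1}\sum_i d_i^{(h)}\delta_{\hat{\lambda}_i}$ is a signed measure with total variation uniformly bounded by $\sup_{[h_1,h_2]}|h|$. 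Its Stieltjes transform is exactly $n^{-1}tr(h(A)(\hat{A}-z)^{-1})$, which converges a.s.\ to $\int h(t)\,dH(t)/(t-z-\sigma^2 m_{\hat{\mu}}(z))$ by Theorem \ref{thm:basic_tool}. Stieltjes inversion identifies the weak limit of $\nu_n$ as the (signed) measure with density $v_h/\pi$, and a standard approximation argument extends the convergence from resolvents $g(x)=(x-z)^{-1}$ to all bounded continuous $g$.

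Granted \eqref{eq:plan_star}, every item of the proposition is bookkeeping. Consider the Stein loss, which decomposes as $L^{st}(A,f(\hat{A}))=tr(A^{-1}f(\hat{A}))-n+\log\det A-\log\det f(\hat{A})$. Divide by $n$ and pass to the limit term by term: the trace piece yields $\int f(x)\,v_{1/t}(x)/\pi\,dx$ via \eqref{eq:plan_star} with $h(t)=1/t$ and $g=f$; the quantity $n^{-1}\log\det A=n^{-1}\sum_i\log\lambda_i$ yields $\int\log t\,dH(t)$ by weak convergence of the empirical spectral distribution of $A_n$ (the hypothesis $h_1>0$ makes $\log t$ bounded continuous on $[h_1,h_2]$); and $n^{-1}\log\det f(\hat{A})=n^{-1}\sum_i\log f(\hat{\lambda}_i)$ yields $\int\log f(x)\,v(x)/\pi\,dx$ by weak convergence of the empirical spectral distribution of $\hat{A}_n$ to $\hat{\mu}$, using that $\log f$ is bounded continuous on a compact neighborhood of $supp(\hat{\mu})$ (continuity and strict positivity of $f$ together with compactness deliver a strictly positive lower bound). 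The remaining four losses decompose analogously as finite combinations of trace terms $tr(h(A)g(\hat{A}))$ with $h\in\{t,1/t,t^2,1/t^2\}$ and $g\in\{f,f^{-1},f^2,f^{-2}\}$, plus $\pm\log\det$ contributions treated as above.

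The final step is pointwise minimization of the limiting functionals, each of the form $C+\int\Phi(x,f(x))\,dx$ with $\Phi(x,\cdot)$ strictly convex on $(0,\infty)$. In case (1) the integrand is $f\cdot v_{1/t}(x)/\pi-v(x)\log f/\pi$, whose unique stationary point is $f(x)=v(x)/v_{1/t}(x)=1/f^*_{1/t}(x)$; in cases (2)--(5) the integrands $f^{-1}v_t+v\log f$, $fv_{1/t}+f^{-1}v_t$, $f^2v_{1/t^2}-2fv_{1/t}$, and $f^{-2}v_{t^2}-2f^{-1}v_t$ are likewise strictly convex and elementary calculus yields $f^*_t$, $\sqrt{f^*_t/f^*_{1/t}}$, $f^*_{1/t}/f^*_{1/t^2}$, and $f^*_{t^2}/f^*_t$ respectively. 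Strict convexity guarantees that these pointwise minimizers are the functional minimizers over bounded positive continuous $f$, provided the candidates themselves lie in that class; this follows because each $v_h$ arises as the density of the weak limit of the nonnegative measures $\nu_n$ when $h\geq 0$, continuity stems from continuity of the limiting boundary values of $m_{\hat{\mu}}$, and boundedness from compactness of $supp(\hat{\mu})$.

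The main obstacle I anticipate is the proof of \eqref{eq:plan_star} for the unbounded-at-zero choices $h(t)=1/t$ and $h(t)=1/t^2$: the hypothesis $h_1>0$ is exactly what is needed to place $h$ in the scope of Theorem \ref{thm:basic_tool} and to make $A_n^{-1}$ well-defined. A secondary nuisance is handling $\log f(\hat{\lambda}_i)$ without assuming a global positive lower bound on $f$; continuity and positivity of $f$ on an open set eventually containing the (compactly clustered) eigenvalues of $\hat{A}_n$ supply the needed local bound, after which the weak convergence argument proceeds unchanged.
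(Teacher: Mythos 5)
Your proposal is correct and follows essentially the same route as the paper: the paper establishes the key asymptotic trace identity $n^{-1}\operatorname{tr}(f(\hat{A})h(A))\to\int f(x)\,v_h(x)/\pi\,dx$ inside the proof of Theorem~\ref{thm:opt_shrinkage_der} (via Cauchy's integral formula and contour deformation, which is the same device as your Stieltjes-inversion argument for the signed measure $n^{-1}\sum_i d_i^{(h)}\delta_{\hat{\lambda}_i}$), then decomposes each loss into trace and $\log\det$ terms, passes to the limit term by term using that identity together with the weak convergence of the empirical spectral distributions of $A_n$ and $\hat{A}_n$, and minimizes the limiting integrand pointwise. Your additional remarks on the role of $h_1>0$ and the local lower bound for $f$ near the eventual spectrum are sensible clarifications of details the paper treats implicitly.
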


\section{Recovery of the Limiting Spectral Distribution}\label{sec:recovery}

So far we have assumed the we know $\sigma,H.$ In practice this is rarely true. Here we explain how
those can be consistently estimated. First of all, assume that $\sigma$ is known. If $\sigma$ is
unknown, we are going to see shortly that the problem can be ill-posed and further assumptions are
needed to guarantee recovery of $\sigma, H.$

\subsection{Spectrum Recovery: known noise level}\label{subsec:known_noise}

When $\sigma$ is known, we suggest the procedure in Algorithm \ref{algo:H_rec} that uses an
optimization problem for recovering the eigenvalues of $A.$

\begin{algorithm}
  \caption{Population Eigenvalues Recovery}
  \label{algo:H_rec}
  \begin{algorithmic}[1]
    \State Inputs:  $\hat{\lambda}_1,\cdots, \hat{\lambda}_n, \sigma .$ 

    \State Sample $\hat{Z} \sim \sigma GOE(n).$

    \State For $T=(t_1,\cdots,t_n)^\intercal\in\mathbb{R}^n$ with $t_1\geq \cdots t_n$, denote by
    $\hat{t}_1\geq \cdots \geq\hat{t_n}$ the eigenvalues of $\diag{T} + n^{-1/2}\hat{Z}.$
   
    \State Solve the optimization problem $T^* = \argmin_T n^{-1} \sum_{j=1}^n(\hat{t}_j-\hat{\lambda}_j)^2.$

    \State Output $T^*$.
  \end{algorithmic}
\end{algorithm}

To minimize the objective above we suggest using the BFGS algorithm. A reasonable choice of a starting point that suggest is a point with independent Gaussian coordinates centered at the sample mean of the spectral distribution of $\hat{A}$. The optimization can be done quickly due to the fact that the gradients of the loss are easy to find in closed form. In particular, we have the following immediate proposition, which shows that the spectral decomposition of $T+\hat{Z}$ contains all the essential information to perform a BFGS update:

\begin{proposition}
Using the notation from Algorithm \ref{algo:H_rec}, if $T+\hat{Z}=\sum_{j=1}^n \hat{t}_j\hat{x}_j\hat{x}_j^\intercal$ is the spectral decomposition of $T+n^{-1/2}\hat{Z},$ we have for all $i=1,\cdots,n:$

$$\partial _{t_i}\hat{t}_j=\hat{x}_{ij}^2.$$ By $\hat{x}_{ij}$ we denote the $i$-th coordinate of $\hat{x}_j\in\mathbb{R}^n.$
\end{proposition}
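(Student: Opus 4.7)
The proposition is a standard Hellmann–Feynman–type computation, so my plan is to appeal to first-order perturbation theory for eigenvalues of a symmetric matrix.

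The plan is to view $M(T) \equiv \diag{T} + n^{-1/2}\hat{Z}$ as a smooth matrix-valued function of the real parameters $t_1,\dots,t_n$, and first reduce to the case where the eigenvalues $\hat{t}_1,\dots,\hat{t}_n$ are all simple. For such $T$, analytic perturbation theory (e.g.\ Kato) guarantees that the eigenvalue $\hat{t}_j$ and a choice of unit eigenvector $\hat{x}_j$ depend analytically on $T$ in a neighborhood of the point in question, so all partial derivatives below exist.

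The key identity I would use is $\hat{t}_j = \hat{x}_j^\intercal M(T) \hat{x}_j$, valid because $\hat{x}_j$ is a unit eigenvector. Differentiating with respect to $t_i$ gives
\[
\partial_{t_i}\hat{t}_j = (\partial_{t_i}\hat{x}_j)^\intercal M(T)\hat{x}_j + \hat{x}_j^\intercal (\partial_{t_i} M(T))\hat{x}_j + \hat{x}_j^\intercal M(T) (\partial_{t_i}\hat{x}_j).
\]
Using $M(T)\hat{x}_j = \hat{t}_j \hat{x}_j$ together with symmetry, the first and third terms combine to $\hat{t}_j \cdot \partial_{t_i}(\hat{x}_j^\intercal \hat{x}_j) = 0$, since $\hat{x}_j$ is kept of unit norm along the deformation. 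Finally, $\partial_{t_i}M(T) = E_{ii}$, the matrix with a single $1$ in entry $(i,i)$, so $\hat{x}_j^\intercal E_{ii}\hat{x}_j = \hat{x}_{ij}^2$, which is the claim.

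The only mild obstacle is the treatment of points where some eigenvalues coincide; one does not really need differentiability there for the BFGS usage, but to state the formula cleanly I would note that the set of $T$ with repeated eigenvalues has Lebesgue measure zero (the noise $\hat{Z}$ has a continuous distribution, so almost surely the perturbed matrix has simple spectrum for Lebesgue-a.e.\ $T$), and that $\hat{t}_j$ is Lipschitz in $T$ by Weyl's inequality, so the a.e.\ formula extends to the correct notion of (Clarke) derivative on the measure-zero exceptional set. This is enough to justify its use in a gradient-based optimizer.
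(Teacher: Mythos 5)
Your argument is correct and is essentially the same as the paper's: the paper invokes the Hadamard first variation formula (Tao, p.~57) applied to $M_i(s)=T+n^{-1/2}\hat{Z}+sE_{ii}$, while you simply re-derive that formula from $\hat{t}_j=\hat{x}_j^\intercal M(T)\hat{x}_j$ and $\partial_{t_i}M=E_{ii}$. Your added remarks on the measure-zero set of degenerate spectra and Lipschitz/Clarke-derivative extension are reasonable padding the paper omits, but they do not change the core computation.
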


\begin{proof}
Let $E_{ii}\in\mathbb{R}^{n\times n}$ be the diagonal matrix with $i$-entry 1 and all other entries 0. Let $M_i(s) = T+n^{-1/2}\hat{Z}+sE_{ii}.$ We have $\frac{d}{ds}M(s)=E_{ii},$ so using the \textit{Hadamard first variation formula} (Page 57, \cite{tao2012topics}), we get $$\partial_{t_i}\hat{t}_j = \hat{x}_j^\intercal E_{ii}\hat{x}_j=\hat{x}_{ij}^2.$$
\end{proof}

We have the following results that justify using this procedure:

\begin{theorem}\label{thm:recovery1}
Under the assumptions from Section \ref{sec:intro}, we have:
\begin{enumerate}
    \item $$\min_T\frac{1}{n}\sum_{i=1}^n\left(\hat{t}_i-\hat{\lambda}_i\right)^2\xrightarrow{a.s.}0.$$
    \item If $T^*$ is a minimizer of the optimization problem above with $t_1^*\geq\cdots\geq t_n^*,$ then
    
    $$\frac{1}{n}\sum_{i=1}^n(t_i^*-\lambda_i)^2\xrightarrow{a.s.}0.$$
\end{enumerate}
\end{theorem}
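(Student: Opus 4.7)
My plan has two parts, corresponding to the two claims. For claim (1), I would exhibit the feasible point $T=(\lambda_1,\ldots,\lambda_n)$ and show the objective vanishes asymptotically. Because $\hat{Z}\sim \sigma GOE(n)$ is orthogonally invariant and independent of $A_n$, conjugating by an orthogonal diagonalizer of $A_n$ shows that the eigenvalues of $\diag{\lambda_1,\ldots,\lambda_n}+n^{-1/2}\hat{Z}$ have the same joint distribution as those of $A_n+n^{-1/2}\hat{Z}$. By Proposition \ref{prop:free_add_semi}, the empirical spectral distributions of both $A_n+n^{-1/2}\hat{Z}$ and $\hat{A}_n=A_n+\sigma n^{-1/2}Z_n$ converge weakly almost surely to $H\boxplus\rho_{sc;\sigma^2}$, since the limit depends only on $H$ and $\sigma^2$ (Wigner universality). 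Combined with eventual uniform boundedness of all eigenvalues involved (compact support of $H$ and edge concentration for Wigner/GOE), weak convergence upgrades to Wasserstein-$2$ convergence. Since for two discrete probability measures of equal mass with decreasingly sorted atoms the squared $W_2$ distance equals $\tfrac{1}{n}\sum_i(\hat{t}_i-\hat{\lambda}_i)^2$, the triangle inequality gives claim (1).

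For claim (2), let $T^*$ with $t_1^*\geq\cdots\geq t_n^*$ be any minimizer. Claim (1) and optimality yield $\tfrac{1}{n}\sum_i(\hat{t}_i^*-\hat{\lambda}_i)^2\to 0$ a.s., so $\tfrac{1}{n}\sum_i\delta_{\hat{t}_i^*}\to H\boxplus\rho_{sc;\sigma^2}$ in $W_2$ a.s. I would next establish uniform second-moment control on $T^*$ by expanding
\[
\frac{1}{n}\sum_i\hat{t}_i^{*2}=\frac{1}{n}\sum_i(t_i^*)^2+2n^{-3/2}\sum_it_i^*\hat{Z}_{ii}+n^{-2}\Norm{\hat{Z}}_F^2,
\]
noting $n^{-2}\Norm{\hat{Z}}_F^2\to\sigma^2$ a.s., bounding the cross term by Cauchy--Schwarz against $\sum_i\hat{Z}_{ii}^2=O(\sigma^2 n)$, and rearranging. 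This yields $\tfrac{1}{n}\sum_i(t_i^*)^2=O(1)$ a.s., hence tightness of $\{\tfrac{1}{n}\sum_i\delta_{t_i^*}\}$. In fact, passing to the limit in the same expansion gives the full second-moment convergence $\tfrac{1}{n}\sum_i(t_i^*)^2\to \int t^2 d(H\boxplus\rho_{sc;\sigma^2})-\sigma^2=\int t^2 dH$.

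Extract an a.s.\ subsequential weak limit $H^*$ of $\tfrac{1}{n}\sum_i\delta_{t_i^*}$. The key step is to identify the weak limit of $\tfrac{1}{n}\sum_i\delta_{\hat{t}_i^*}$ as $H^*\boxplus\rho_{sc;\sigma^2}$, which must then equal $H\boxplus\rho_{sc;\sigma^2}$. Invertibility of free additive convolution by a semicircle (via $R$-transform additivity: $R_{H^*}=R_{H^*\boxplus\rho_{sc;\sigma^2}}-R_{\rho_{sc;\sigma^2}}=R_H$, so $H^*=H$) forces every subsequential weak limit to be $H$. Combining this with the matching second moments upgrades weak convergence to Wasserstein-$2$ convergence $\tfrac{1}{n}\sum_i\delta_{t_i^*}\to H$. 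Since $\tfrac{1}{n}\sum_i\delta_{\lambda_i}\to H$ in $W_2$ and both empirical measures are decreasingly sorted, the triangle inequality gives $\tfrac{1}{n}\sum_i(t_i^*-\lambda_i)^2=W_2^2\p{\tfrac{1}{n}\sum_i\delta_{t_i^*},\tfrac{1}{n}\sum_i\delta_{\lambda_i}}\to 0$ a.s., proving claim (2).

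The main obstacle is justifying the identification of the weak limit of $\tfrac{1}{n}\sum_i\delta_{\hat{t}_i^*}$ as $H^*\boxplus\rho_{sc;\sigma^2}$, because $T^*$ depends measurably on $\hat{Z}$ (and on $\hat{\lambda}_1,\ldots,\hat{\lambda}_n$), so Proposition \ref{prop:free_add_semi} does not apply off the shelf. I would handle this with a uniform-in-$T$ concentration estimate: for $z\in\mathbb{C}^+$, the Stieltjes transform $m_n(z;T)=\tfrac{1}{n}\operatorname{tr}\p{(\diag{T}+n^{-1/2}\hat{Z}-zI)^{-1}}$ is $O\p{n^{-1}|\Im z|^{-2}}$-Lipschitz in the entries of $\hat{Z}$, so Gaussian concentration yields $m_n(z;T)-\EE{m_n(z;T)}=o(1)$ a.s.\ uniformly over $T$ in any bounded $\ell^\infty$-ball; by standard arguments $\EE{m_n(z;T)}$ converges to the solution of the Proposition \ref{prop:free_add_semi} fixed-point equation with $H$ replaced by $\tfrac{1}{n}\sum_i\delta_{t_i}$, again uniformly. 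Applied to $T=T^*$ along a subsequence where the empirical measure tends to $H^*$, this legitimately identifies the subsequential limit of $m_n(z;T^*)$ with the Stieltjes transform of $H^*\boxplus\rho_{sc;\sigma^2}$, closing the argument.
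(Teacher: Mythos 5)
Your proposal follows the same overall architecture as the paper's proof (a feasible point plus universality for part (1); tightness, free-convolution identification, and a $W_2$ upgrade for part (2)), but it is noticeably more careful at two places where the paper's argument is compressed, and it uses a different device for second-moment control. Specifically, where the paper deduces tightness of $\nu_n=\tfrac{1}{n}\sum_i\delta_{t_i^*}$ from Weyl's inequality combined with the almost sure edge bound $\|n^{-1/2}\hat{Z}\|_{op}\to 2\sigma$, you instead expand $\tfrac{1}{n}\|\diag{T^*}+n^{-1/2}\hat{Z}\|_F^2$ and apply Cauchy--Schwarz to the cross term; both yield the needed $O(1)$ bound on $\tfrac{1}{n}\sum_i(t_i^*)^2$, but yours also hands you the exact limit $\tfrac{1}{n}\sum_i(t_i^*)^2\to\int t^2\,dH$, which makes the final $W_2$ upgrade cleaner (weak convergence plus matching second moments). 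More importantly, you correctly flag that the paper's sentence asserting $\nu_n\boxplus\rho_{sc;\sigma^2}\to H\boxplus\rho_{sc;\sigma^2}$ is not a direct consequence of Proposition \ref{prop:free_add_semi}, because $T^*$ is a measurable function of $\hat{Z}$ and the free-convolution limit theorem is stated for deterministic (or independent) diagonals; your uniform-in-$T$ concentration of the resolvent trace, followed by passing to a subsequence along which $\nu_n\to H^*$, is a legitimate and fairly standard fix, and the Lipschitz/net bookkeeping you sketch does go through. You also make explicit the cancellation step $H^*\boxplus\rho_{sc;\sigma^2}=H\boxplus\rho_{sc;\sigma^2}\Rightarrow H^*=H$ via additivity of $R$-transforms, which the paper uses without comment here (though it invokes the same idea in the proof of Proposition \ref{prop:unknown_sigma}). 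In short: same route, but your version closes a genuine dependency gap the paper glosses over, at the price of an extra uniform-concentration lemma.
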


\begin{remark}
  \begin{enumerate}
  \item In the optimization problem we use only one copy of $\hat{Z}\sim\sigma GOE(n).$ In the
    high-dimensional limit $n\rightarrow\infty$ this is enough. Alternatively, as a regularization
    step, we could use multiple copies and solve the optimization problem repeatedly, getting
    solutions $T_1^*,\cdots,T_K^*.$ We can then return $T^*=K^{-1}\sum_{i=1}^KT_i^*.$
  \item Theorem \ref{thm:recovery1} shows that
    $n^{-1}\sum_{i=1}^n\delta_{t_i^*}\xrightarrow{a.s.}H.$ This implies that the estimated
    eigenvalues can be used as input to Algorithm \ref{algo:MC_shrink} and the assumptions of
    Theorem \ref{thm:mc_average} will be satisfied.
  \end{enumerate}
\end{remark}

\subsection{Spectrum Recovery: unknown noise level}
If $\sigma$ is unknown, it is impossible to recover the measure $H$ simply by observing the free
additive convolution with a semicircular measure of variance $\sigma^2.$ To see why, assume that $H$
is semicircular with variance $s^2.$ Then, $\mu_{H,\sigma^2}$ is semicircular with variance
$s^2+\sigma^2$ and it is impossible to separate the semicircular components of this measure. We
conclude that further assumptions are needed. In fact, it is clear from the discussion above that
only probability measures that cannot be written as the free additive convolution of a semicircular
distribution and another probability measure are candidates for exact asymptotic recovery. For this
reason, we are going to impose the following assumption throughout this section.

\begin{assumption}
The measure $H$ cannot be written as the free additive convolution of a semicircular distribution with positive variance and a probability measure.
\end{assumption}

In that case, if we solve the optimization problem from \ref{subsec:known_noise} for a choice $\hat{\sigma}<\sigma,$ Theorem \ref{thm:recovery1} suggests that the output will recover $H\boxplus\rho_{sc;\sigma^2-\hat{\sigma}^2},$ while the objective should converge to 0. If we solve for a choice $\hat{\sigma}>\sigma,$ then it is impossible to make the objective tend to $0.$ In particular, we have the following:

\begin{proposition}\label{prop:unknown_sigma}
Let $R_n(\hat{\sigma})$ be the optimal value of the objective of the optimization problem in Algorithm \ref{algo:H_rec} with $\sigma$ substituted by $\hat{\sigma}$. Then:
\begin{enumerate}
    \item For $\hat{\sigma} <\sigma,$ $\limsup{R_n(\hat{\sigma})} = 0.$
    \item For $\hat{\sigma} > \sigma,$ $\liminf{R_n(\hat{\sigma})} > 0.$
\end{enumerate}
\end{proposition}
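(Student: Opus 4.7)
The argument splits cleanly into the two cases.

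For $\hat{\sigma}<\sigma$ I would exhibit an explicit suboptimal $T$ and verify that its objective tends to zero a.s. Draw an independent $Z'_n\sim GOE(n)$ and let $T^{(n)}$ be the sorted eigenvalues of $A_n+\sqrt{\sigma^2-\hat{\sigma}^2}\,n^{-1/2}Z'_n$. Proposition \ref{prop:free_add_semi} gives that the empirical measure of $T^{(n)}$ converges weakly a.s.\ to the compactly supported measure $H\boxplus\rho_{sc;\sigma^2-\hat{\sigma}^2}$, and since $T^{(n)}$ is independent of $\hat{Z}_n$ a second application of Proposition \ref{prop:free_add_semi} shows that the spectrum of $B_n:=\mathrm{diag}(T^{(n)})+\hat{\sigma}n^{-1/2}\hat{Z}_n$ has limit $H\boxplus\rho_{sc;\sigma^2-\hat{\sigma}^2}\boxplus\rho_{sc;\hat{\sigma}^2}=H\boxplus\rho_{sc;\sigma^2}$, the same limit as $\mu_{\hat{\lambda}^{(n)}}$. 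Both empirical measures are eventually supported in a fixed compact interval (by Weyl's inequality and the Bai--Yin bound on Wigner operator norms), so weak convergence lifts to convergence in the $W_2$-metric. Because the algorithm's objective is exactly the $W_2^2$-distance between the two sorted empirical measures, it tends to zero a.s., and so does the infimum $R_n(\hat{\sigma})$.

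For $\hat{\sigma}>\sigma$ I would argue by contradiction. Assume $\liminf R_n(\hat{\sigma})=0$ with positive probability; on that event pass to a subsequence with $R_n\to 0$ and pick near-optimal $T^{(n)}$ with spectra $\hat{t}^{(n)}$. Using $R_n\to 0$ together with $\mu_{\hat{\lambda}^{(n)}}\to H\boxplus\rho_{sc;\sigma^2}$ in $W_2$, the same $W_2$-limit holds for $\mu_{\hat{t}^{(n)}}$. Weyl's inequality $|t_i^{(n)}-\hat{t}_i^{(n)}|\le\|\hat{\sigma}n^{-1/2}\hat{Z}_n\|\le 3\hat{\sigma}$ (eventually a.s.) combined with boundedness of $\mu_{\hat{\lambda}^{(n)}}$ gives tightness of $\mu_{T^{(n)}}$ with limiting support in a fixed compact interval; pass to a further subsequence along which $\mu_{T^{(n)}}\to\nu$ weakly for some compactly supported probability measure $\nu$. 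Applying Proposition \ref{prop:free_add_semi} to the pair $(T^{(n)},\hat{Z}_n)$ (with the caveat noted below) yields $\mu_{\hat{t}^{(n)}}\to\nu\boxplus\rho_{sc;\hat{\sigma}^2}$, and equating with the previous limit forces
\[
\nu\boxplus\rho_{sc;\hat{\sigma}^2}=H\boxplus\rho_{sc;\sigma^2}.
\]
Linearizing via the $R$-transform, $R_H(z)=R_\nu(z)+(\hat{\sigma}^2-\sigma^2)z$, i.e.\ $H=\nu\boxplus\rho_{sc;\hat{\sigma}^2-\sigma^2}$ with $\hat{\sigma}^2-\sigma^2>0$, violating the standing assumption on $H$.

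The main obstacle is the appeal to Proposition \ref{prop:free_add_semi} in the contradiction argument: as stated, the proposition requires the population to be independent of the Wigner noise, while the optimizer $T^{(n)}$ is built out of $\hat{Z}_n$ through the minimization. I would handle this by arguing that the limit of the spectral measure of $\mathrm{diag}(T^{(n)})+\hat{\sigma}n^{-1/2}\hat{Z}_n$ depends only on the limiting empirical distribution $\nu$ of $T^{(n)}$ and on $\hat{\sigma}^2$, not on the joint law. Concretely, sample $\tilde{Z}_n\sim GOE(n)$ independent of everything; applied conditionally on $T^{(n)}$, Proposition \ref{prop:free_add_semi} gives that $\mathrm{diag}(T^{(n)})+\hat{\sigma}n^{-1/2}\tilde{Z}_n$ has limiting spectrum $\nu\boxplus\rho_{sc;\hat{\sigma}^2}$. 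A resolvent / leave-one-out analysis then shows that the Stieltjes transforms of both $\mathrm{diag}(T^{(n)})+\hat{\sigma}n^{-1/2}\hat{Z}_n$ and $\mathrm{diag}(T^{(n)})+\hat{\sigma}n^{-1/2}\tilde{Z}_n$ satisfy the same subordination equation $m(z)=\int(t-z-\hat{\sigma}^2 m(z))^{-1}\,d\mu_{T^{(n)}}(t)$ up to $o(1)$ errors, so they share the same limit. An alternative route is to discretize the compact range of admissible $T^{(n)}$'s, apply Proposition \ref{prop:free_add_semi} pointwise on a countable dense family of deterministic candidates (where independence is automatic), and pass to the limit using continuity of $\mu\mapsto\mu\boxplus\rho_{sc;\hat{\sigma}^2}$ in the weak topology.
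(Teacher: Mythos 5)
Your argument mirrors the paper's in structure. For $\hat{\sigma}<\sigma$ both proofs exhibit a feasible $T$ whose empirical distribution converges a.s.\ to $H\boxplus\rho_{sc;\sigma^2-\hat{\sigma}^2}$ (the paper plugs in quantiles of that measure, you plug in the spectrum of $A_n+\sqrt{\sigma^2-\hat{\sigma}^2}\,n^{-1/2}Z_n'$; both are independent of the in-algorithm $\hat Z$, so Proposition~\ref{prop:free_add_semi} applies directly and the $W_2$-distance of the two sorted empirical spectra vanishes a.s.). For $\hat{\sigma}>\sigma$ both proofs argue by contradiction: pass to a subsequence with $R_{n_k}\to 0$, use Weyl plus boundedness of the Wigner operator norm to get tightness of $\mu_{T^{(n_k)*}}$, extract a weak limit $\nu$, conclude $\nu\boxplus\rho_{sc;\hat{\sigma}^2}=H\boxplus\rho_{sc;\sigma^2}$ and hence $H=\nu\boxplus\rho_{sc;\hat{\sigma}^2-\sigma^2}$, contradicting the standing assumption on $H$. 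Where you go beyond the paper is in explicitly flagging that the minimizer $T^{(n)*}$ is built out of $\hat Z_n$, so the step asserting that the spectrum of $\diag{T^{(n)*}}+\hat{\sigma}n^{-1/2}\hat Z_n$ converges to $\nu\boxplus\rho_{sc;\hat{\sigma}^2}$ is not a direct application of Proposition~\ref{prop:free_add_semi}; the paper (here, and in the second part of Theorem~\ref{thm:recovery1} to which it refers) leaves this implicit. Your discretization patch is the right way to close it: apply Proposition~\ref{prop:free_add_semi} along a countable $W_2$-dense family of deterministic candidate spectra, then transfer to $T^{(n)*}$ via the $1$-Lipschitzness of $T\mapsto\hat t(T)$ in $W_2$ (Hoffman--Wielandt) together with $W_2$-contractivity of $\mu\mapsto\mu\boxplus\rho_{sc;\hat{\sigma}^2}$. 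That route is cleaner than your resolvent/leave-one-out alternative, which runs into the same dependence obstruction you were trying to avoid (the quadratic-form concentration there again presumes the diagonal part is independent of the row being removed). So: same approach as the paper, with one tacit step made rigorous.
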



Proposition \ref{prop:unknown_sigma} indicates that we can use a scree plot -type method to determine the noise level $\sigma.$ In particular, we can solve the problem for several choices of the noise level and choose $\sigma$ before the objective becomes significantly  larger than 0. This is going to be illustrated in Section \ref{sec:numerical}.

\section{Asymptotic Expansions}\label{sec:asympt_expand}

We study the asymptotic expansions of the oracle quantities and the optimal shrinkage functions in the regimes of "large noise" ($\sigma\rightarrow \infty$) and "small noise" ($\sigma\rightarrow  0).$

\subsection{The Large Noise Asymptotics}

If $\sigma\rightarrow\infty,$ we have the following:

\begin{proposition}\label{prop:large_noise}
\begin{enumerate}
    \item If $Z\sim GOE(n),$ the oracle quantities $d_i^{(h)}$ defined in \eqref{eq:oracle_d} almost surely satisfy:
    
    $$\lim_{n\rightarrow\infty}\lim_{\sigma\rightarrow\infty}\max_{1\leq i\leq n}\abs{d_i^{(h)}-\int h(t)dH(t)}=0.$$
    
    \item The optimal shrinkage $f_h^*(x)$ satisfies $$\lim_{\sigma \rightarrow\infty}f_h^*(\sigma x)=\int h(t)dH(t)$$ for $\abs{x}<2.$
\end{enumerate}
\end{proposition}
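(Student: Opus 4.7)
The plan for both parts is to exploit the scaling behavior that emerges as $\sigma\to\infty$. The rescaled matrix $\sigma^{-1}\hat{A}_n = \sigma^{-1}A_n + n^{-1/2}Z_n$ is a vanishing perturbation of a pure Wigner matrix, and correspondingly if $\tilde{\mu}$ denotes the pushforward of $\hat{\mu}$ under $y\mapsto y/\sigma$ then $\tilde{\mu}\to\mu_{sc}$, with Stieltjes transforms related by $m_{\hat{\mu}}(\sigma\zeta)=\sigma^{-1}m_{\tilde{\mu}}(\zeta)$. Throughout I use the semicircle identity $m_{sc}(\zeta)=-1/(\zeta+m_{sc}(\zeta))$, which follows from $m_{sc}^2+\zeta m_{sc}+1=0$.

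For part (1), recall that $d_i^{(h)}=\hat{w}_i^\intercal h(A)\hat{w}_i$, and the eigenvectors of $\hat{A}$ coincide with those of $\sigma^{-1}\hat{A}=\sigma^{-1}A+n^{-1/2}Z$. For fixed $n$, as $\sigma\to\infty$ the perturbation $\sigma^{-1}A$ vanishes, and because $n^{-1/2}Z$ almost surely has simple spectrum, standard matrix-perturbation theory gives $\hat{w}_i\to g_i$, where $g_i$ is the unit eigenvector of $n^{-1/2}Z$ with the $i$-th largest eigenvalue. Hence $\lim_{\sigma\to\infty}d_i^{(h)}=g_i^\intercal h(A)g_i$. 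By orthogonal invariance of GOE and its independence from $A$, conditionally on $A$ the matrix $[g_1,\ldots,g_n]$ is Haar-distributed on $O(n)$, so each $g_i$ is uniform on the unit sphere of $\mathbb{R}^n$. Levy's spherical concentration applied to the quadratic form $g\mapsto g^\intercal h(A)g$, which is $2\|h(A)\|_{op}$-Lipschitz, yields $P(|g_i^\intercal h(A)g_i - n^{-1}tr\,h(A)|>\delta\mid A)\leq 2\exp(-cn\delta^2/\|h(A)\|_{op}^2)$. Because $\|h(A)\|_{op}\leq\sup_{[h_1,h_2]}|h|$ uniformly in $n$, a union bound over $i$ combined with Borel-Cantelli gives $\max_i|g_i^\intercal h(A)g_i - n^{-1}tr\,h(A)|\to 0$ almost surely, while $n^{-1}tr\,h(A)\to\int h\,dH$ by the weak convergence assumption on $\mu_n$.

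For part (2), I substitute $x\mapsto\sigma x$ into the defining integral for $u_h+iv_h$ and use the scaling $\sigma^2 m_{\hat{\mu}}(\sigma x+i\eta)=\sigma m_{\tilde{\mu}}(x+i\eta/\sigma)$. Since $H$ has compact support, $t$ remains bounded in the integrand, so the denominator equals $-\sigma\bigl(x+m_{\tilde{\mu}}(x+i\eta/\sigma)\bigr)(1+O(1/\sigma))$. Letting $\eta\downarrow 0$ and using $m_{\tilde{\mu}}(x+i0^+)\to m_{sc}(x+i0^+)$, together with the semicircle identity, I obtain $u_h(\sigma x)+iv_h(\sigma x)=\sigma^{-1}m_{sc}(x)\int h(t)\,dH(t)\cdot(1+o(1))$. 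The special case $h\equiv 1$ similarly gives $u(\sigma x)+iv(\sigma x)=\sigma^{-1}m_{sc}(x)(1+o(1))$, whose imaginary part $\sigma^{-1}\sqrt{4-x^2}/2$ is strictly positive for $|x|<2$. Taking imaginary parts and dividing cancels both the $\sigma^{-1}$ and the geometric factor, yielding $f_h^*(\sigma x)=v_h(\sigma x)/v(\sigma x)\to\int h(t)\,dH(t)$.

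The main technical subtlety lies in part (1): the interchange of $\sigma\to\infty$ with the concentration step must respect the ordering of eigenvalues, which is justified by the almost-sure simplicity of the GOE spectrum for fixed $n$, and the concentration bound must be strong enough to survive a union bound over $n$ indices. Both points succeed only because the Lipschitz constant $\|h(A)\|_{op}$ is uniformly bounded in $n$, which makes the tail bound summable along the iterated limit $\lim_n\lim_\sigma$.
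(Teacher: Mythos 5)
Your proof is correct and follows essentially the same path as the paper's. Part (1) is virtually identical: both arguments let $\sigma\to\infty$ at fixed $n$ so that the eigenvectors of $\hat A$ pass to those of the GOE, invoke Haar-uniformity of the eigenvector basis conditionally on $A$, apply Lipschitz concentration on the sphere (the paper cites Vershynin's Theorem 5.1.4, you cite Levy), and finish with a union bound plus Borel--Cantelli; your write-up even fixes a small notational slip in the paper, which writes $z_i^\intercal A z_i$ where $z_i^\intercal h(A) z_i$ is meant.

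For part (2) the core computation is the same --- both proofs rescale to show $\sigma\, m_{\hat\mu}(\sigma x + i0^+)$ tends to the boundary value of the semicircle Stieltjes transform for $|x|<2$ --- but the final step differs. The paper writes $f_h^*(\sigma x)$ directly as a single real integral $\int h(t)\,dH(t)\big/\big[(\sigma^{-1}t - x - \sigma u(\sigma x))^2+\sigma^2 v(\sigma x)^2\big]$, observes that the corresponding kernel has total $H$-mass exactly $1$, and applies Scheff\'e's lemma once the kernel is shown to converge pointwise to $1$. You instead compute the complex asymptotics of $u_h(\sigma x)+iv_h(\sigma x)$ and $u(\sigma x)+iv(\sigma x)$ separately and divide imaginary parts. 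That route is fine, but it requires slightly more care than you give it: the $(1+o(1))$ factors multiplying $\sigma^{-1}m_{sc}(x)$ are complex and differ between numerator and denominator, so "cancelling the geometric factor'' is not purely algebraic. What saves the argument is that $\int h\,dH$ is real and $\operatorname{Im}\,m_{sc}(x)>0$ for $|x|<2$, so the imaginary parts are both $\Theta(\sigma^{-1})$ with $O(\sigma^{-2})$ corrections, making the ratio converge. The paper's Scheff\'e step sidesteps this bookkeeping entirely; worth adopting if you want the cleanest version.
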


\begin{remark}
  Proposition \ref{prop:large_noise} shows that in the regime of very large $\sigma,$ the optimal
nonlinear shrinkage quantities for estimation of $h(A)$ in Frobenius norm are essentially constant
and achieve mean-squared-error equal to $\Var{h(H)}.$ This is reasonable, as an extremely large
$\sigma$ should make estimation of $h(A)$ extremely hard. Notice that for $\sigma\rightarrow\infty$
the eigenvalues of $\hat{A}$ scale almost linearly with $\sigma$ and the limiting spectral
distribution of $\sigma^{-1}\hat{A}$ is the semicircle law, which is indeed supported on $[-2,2].$
\end{remark}

\subsection{The Small Noise Asymptotics}

We now study the regime $\sigma\rightarrow 0.$ Since for $\sigma=0$ the eigenvectors of $A$ may not
be uniquely determined, we assume for simplicity in this subsection that $A$ has distinct
eigenvalues. In that case we have for $\sigma\rightarrow 0:$

\begin{proposition}\label{prop:small_noise}

If $h\in C^1(\mathbb{R})$ and $Z\sim GOE(n):$ 
\begin{enumerate}
\item The oracle quantities for $\sigma\rightarrow 0$ satisfy: $$\lim_{\sigma\rightarrow 0}\max_{1\leq i\leq n}\frac{\abs{d_i^{(h)}-h(\lambda_i)}}{\sigma}=0.$$
  
\item $$\lim_{n\rightarrow\infty}\lim_{\sigma\rightarrow
  0}\frac{\Norm{h(\hat{A})-h(A)}_F^2}{n\sigma^2}=\iint
  \frac{(h(t)-h(s))^2}{(t-s)^2}dH(t)dH(s).$$
  
\end{enumerate}

\end{proposition}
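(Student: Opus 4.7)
The plan has two parts, matching the two claims.

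For the first claim, I would use second-order analytic perturbation theory for symmetric matrices with distinct eigenvalues. Writing $\hat{A} = A + \sigma V$ with $V = n^{-1/2} Z$, Rayleigh-Schrödinger expansion gives $\hat{w}_i = w_i + \sigma w_i^{(1)} + O(\sigma^2)$ with $w_i^{(1)} = \sum_{j \neq i} \frac{(W^\intercal V W)_{ji}}{\lambda_i - \lambda_j} w_j$. Since $h(A) w_j = h(\lambda_j) w_j$ and the $w_j$ are orthonormal, $\langle w_i, h(A) w_i^{(1)}\rangle = 0$, so the first-order contribution to $d_i^{(h)} - h(\lambda_i) = 2\sigma\langle w_i, h(A) w_i^{(1)}\rangle + O(\sigma^2)$ vanishes, leaving $d_i^{(h)} - h(\lambda_i) = O(\sigma^2)$. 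Because $n$ is fixed and all $\lambda_i$ are distinct, the implied constant is uniform in $i$, so dividing by $\sigma$ and sending $\sigma \to 0$ yields the claim.

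For the second claim, I would start from the mixed-basis identity
$$\|h(\hat{A}) - h(A)\|_F^2 = \sum_{i,j=1}^n (h(\hat{\lambda}_i) - h(\lambda_j))^2 (w_j^\intercal \hat{w}_i)^2,$$
which follows by expanding $\operatorname{tr}((h(\hat A) - h(A))^2)$ in the two orthonormal bases and using $\sum_i(w_j^\intercal \hat w_i)^2 = \sum_j (w_j^\intercal\hat w_i)^2 = 1$. The diagonal terms $i=j$ are $O(\sigma^2)$ by eigenvalue perturbation theory, so contribute $O(\sigma^2)$ in total. For $i\neq j$, $(w_j^\intercal \hat{w}_i)^2 = \sigma^2 (W^\intercal V W)_{ji}^2/(\lambda_i-\lambda_j)^2 + O(\sigma^3)$ and $(h(\hat\lambda_i) - h(\lambda_j))^2 = (h(\lambda_i)-h(\lambda_j))^2 + O(\sigma)$. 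Letting $\sigma \to 0$ gives
$$\lim_{\sigma\to 0}\frac{\|h(\hat A) - h(A)\|_F^2}{n\sigma^2} = \frac{1}{n}\sum_{i\neq j}(W^\intercal VW)_{ji}^2\,\frac{(h(\lambda_i) - h(\lambda_j))^2}{(\lambda_i - \lambda_j)^2}+O(1/n).$$

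To send $n\to\infty$, orthogonal invariance of the GOE shows that $W^\intercal Z W \sim Z$ in law, so $\mathbb{E}[(W^\intercal V W)_{ji}^2] = 1/n$ for $i\neq j$. The integrand $\Phi(s,t) = (h(t)-h(s))^2/(t-s)^2$ extends continuously to the diagonal of $[h_1, h_2]^2$ with $\Phi(t,t) = h'(t)^2$ by the $C^1$ hypothesis on $h$, and is therefore bounded. Conditional on $A$, the expectation of the displayed sum equals $\frac{1}{n^2}\sum_{i\neq j}\Phi(\lambda_i,\lambda_j)$, which converges almost surely to $\iint\Phi(s,t)\,dH(t)\,dH(s)$ by weak convergence of the empirical spectral distribution of $A$. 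A variance computation using independence of the upper-triangular GOE entries shows the random sum concentrates on its mean at rate $O(1/n)$, and Borel-Cantelli promotes this to almost sure convergence.

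The main technical obstacle is bookkeeping: one must verify that the $O(\sigma^2)$ and $O(\sigma^3)$ perturbation remainders, when summed over $i,j$, remain negligible after dividing by $n\sigma^2$. Because $n$ is fixed when $\sigma\to 0$, this is immediate from analytic perturbation theory with a constant depending only on $\min_{i\neq j}|\lambda_i - \lambda_j|$. The only place where $C^1$ regularity of $h$ is crucial is the continuity of $\Phi$ on the diagonal, which is exactly what is needed to interpret the large-$n$ limit of the Riemann sum as the double integral in the statement.
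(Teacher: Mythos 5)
Your argument is correct and matches the paper's proof in substance: part 1 uses the vanishing of the first-order Rayleigh-Schr\"odinger/Hadamard correction (the paper deduces $w_i^\intercal \tfrac{d\hat w_i}{d\sigma}|_{\sigma=0}=0$ from $\hat w_i^\intercal\hat w_i=1$ while you read it off the explicit formula for $w_i^{(1)}$, but these are the same fact), and part 2 reduces, after the $\sigma\to 0$ limit, to the double sum $n^{-2}\sum_{i,j}Z_{ij}^2\,(h(\lambda_i)-h(\lambda_j))^2/(\lambda_i-\lambda_j)^2$ which is then handled exactly as in the paper via orthogonal invariance of the GOE, a conditional-expectation computation, and concentration. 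The only organizational difference is that you enter through the mixed-basis identity $\Norm{h(\hat A)-h(A)}_F^2=\sum_{i,j}(h(\hat\lambda_i)-h(\lambda_j))^2(w_j^\intercal\hat w_i)^2$ and perturb eigenvalues and overlaps separately, whereas the paper expands $h(\hat A)-h(A)$ directly as a matrix via the Hadamard variation formula and then takes the Frobenius norm; both routes give the same leading-order sum.
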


We see from Proposition \ref{prop:small_noise} that the mean-squared-error grows sublinearly in
$\sigma^2$ for the optimal nonlinear shrinkage, if $\sigma$ is small, while using no shrinkage gives
mean squared error $\approx \sigma^2 \iint (h(t)-h(s))^2/(t-s)^2dH(t)dH(s)$ for $\sigma $
small. This is because, as we see from part 1 of Proposition \ref{prop:small_noise} in the Gaussian
case, the oracle quantities converge to $h(\lambda_i)$ fast for $\sigma\rightarrow 0.$

\section{Numerical Experiments}\label{sec:numerical}

\subsection{Experiments for Algorithm \ref{algo:H_rec}}
Here we consider three examples.

\textbf{Example 1.} Firstly we check the effectiveness of the deconvolution algorithm (Algorithm
\ref{algo:H_rec}). For $H=(\delta_1+\delta_4+\delta_9)/3,\sigma^2=1$ and 20 equally spaced values of
$n$ (starting from $n=50$ and ending with $n=1000$) we solve the optimization problem described in
Algorithm \ref{algo:H_rec}. We start from 10 randomly initialized points and keep the stationary
point of the objective that leads to the smallest value. We plot in Figure \ref{fig:deconv1}, as a
function of $n,$ the resulting normalized mean squared error, which we define
as $$\frac{\frac{1}{n}\sum_{i=1}^n(t_i^*-\lambda_i)^2}{\Var{H}}.$$ We also present the recovered
eigenvalues $t_i^*$ versus $i=1,\cdots,n$ for the values $n=250,500,750,1000.$

\begin{figure}
    \centering
    \begin{subfigure}[b]{0.49\textwidth}
    \includegraphics[width=\textwidth]{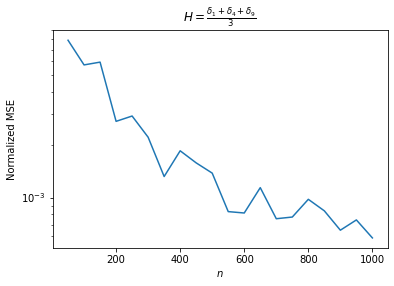}
    \label{fig:deconv1.1}
    \caption{Algorithm \ref{algo:H_rec} Objective}
    \end{subfigure}
    \hfill
    \begin{subfigure}[b]{0.47\textwidth}
    \centering
    \includegraphics[width=\textwidth]{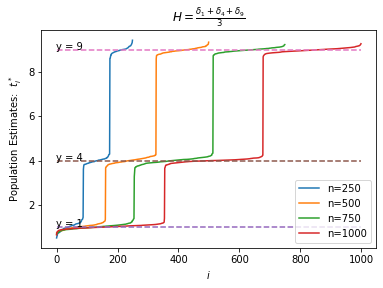}
    \label{fig:deconv1.2}
    \caption{Recovered Eigenvalues}

    \end{subfigure}
    \caption{Algorithm \ref{algo:H_rec} Experiment for $H=(\delta_1+\delta_4+\delta_9)/3$}
    \label{fig:deconv1}
\end{figure}

\textbf{Example 2.}  For a more complicated choice of spectral distribution $H$ we design the
following experiment. We consider 200 randomly sampled points from circles centered at 0 with radii
0.5 and 1 respectively (presented with red and blue dots in the plot below). We add Gaussian noise
with standard deviation 0.05 to the data. After generating those points, labeled as
$x_1,\cdots,x_{200}\in\mathbb{R}^2,$ we build the connectivity matrix $A\in\mathbb{R}^{200\times
  200}$ using the Gaussian kernel: $$A_{ij}=\exp{\left(-\frac{\Norm{x_i-x_j}^2}{2h^2}\right)}.$$
Here we choose $h=0.1.$ We assume that we have access only to a
matrix $$\hat{A}=A+\sqrt{\frac{2}{200}}Z,$$ where $Z$ is a standard Gaussian Wigner matrix. This
corresponds to the choice $\sigma^2=2.$ We use Algorithm \ref{algo:H_rec} to estimate the
eigenvalues of $A.$ Below we plot the sample eigevalues (that is the eigenvalues of $\hat{A}$), the
true eigenvalues of $A$ and, finally, the estimated eigenvalues from the deconvolution algorithm. We
see in Figure \ref{fig:deconv2} that the reconstruction is very close.

\begin{figure}
    \centering
    \begin{subfigure}[b]{0.49\textwidth}
    \includegraphics[width=\textwidth]{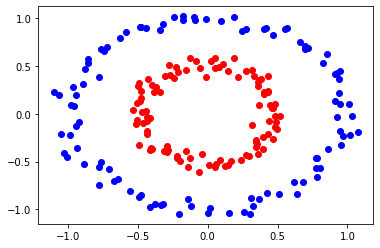}
    \label{fig:deconv2.1}
    \caption{$n=200$ Generated Data Points}
    \end{subfigure}
    \hfill
    \begin{subfigure}[b]{0.47\textwidth}
    \centering
    \includegraphics[width=\textwidth]{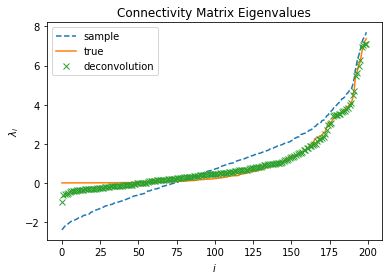}
    \label{fig:deconv2.2}
    \caption{Eigenvalues}

    \end{subfigure}
    \caption{Algorithm \ref{algo:H_rec} Experiment for the Connectivity Matrix created using a Gaussian Kernel with $h=0.1.$}
    \label{fig:deconv2}
\end{figure}

\textbf{Example 3.} Finally, we consider an example with unknown noise level $\sigma^2.$ In
particular, we consider $H=(\delta_5+\delta_{10})/2,\sigma^2=1,n=200.$ We take $Z$ to have entries
drawn from a Laplace distribution. This time $\sigma^2$ is unknown, so we have to use several
choices $\hat{\sigma}$ in the optimization problem and choose the largest $\hat{\sigma}$ for which
the objective is close to 0. Figure \ref{fig:deconv3} indicates using $\hat{\sigma}^2$ from 0.951 to
1.029. Refining the grid can give us an even closer estimate. We solve the optimization problem for
$\hat{\sigma}^2=0.99,$ which is the midpoint between the two values of $\sigma^2$ from above.

\begin{figure}
    \centering
    \begin{subfigure}[b]{0.49\textwidth}
    \includegraphics[width=\textwidth]{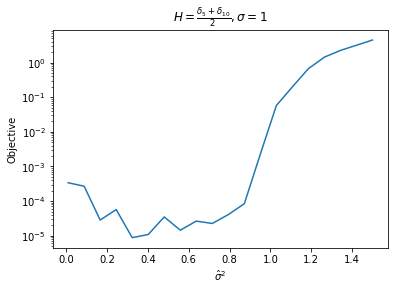}
    \label{fig:deconv3.1}
    \caption{Algorithm \ref{algo:H_rec} Objective versus $\hat{\sigma}^2$}
    \end{subfigure}
    \hfill
    \begin{subfigure}[b]{0.47\textwidth}
    \centering
    \includegraphics[width=\textwidth]{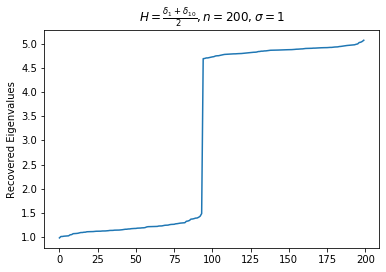}
    \label{fig:deconv3.2}
    \caption{Eigenvalues for the estimated $\hat{\sigma}^2=0.99$}

    \end{subfigure}
    \caption{Algorithm \ref{algo:H_rec} Experiment for $H=(\delta_1+\delta_{10})/2,\sigma=1.$ Here $\sigma$ is unknown and is estimated from $\hat{A}.$}
    \label{fig:deconv3}
\end{figure}



\subsection{Noisy Linear Systems of Equations.}

The first application we consider is the following. We want to solve a linear system of equations of
the form $Ax=b,$ whose solution we denote $x^*=A^{-1}b.$ The matrix $A$ is unknown. Instead we have
access to a noisy estimate $\hat{A}=A+\sigma n^{-1/2}Z,$ where $A,Z$ satisfy the assumptions from
Section \ref{sec:intro}.  Solving $\hat{A}x=b$ gives $x=\hat{A}^{-1}b.$ The problem is that
$\hat{A}^{-1}$ might be a very bad estimate of $A^{-1}$ and ill-conditioned. For this reason we
suggest using $x^{(f)}=f(\hat{A})b,$ where $f$ is a bounded continuous function on $[h_1,h_2].$ Our
goal is to choose $f$ to minimize $$\lim_{n\rightarrow\infty}\frac{\Norm{x^{(f)}-x^*}^2}{n}.$$ We
study two different distributional assumptions on $b.$

\begin{enumerate}
\item $b\sim \mathcal{N}(0,I_n)$. In that case using Lemma \ref{lp_lemma} we see that:
  
  \begin{equation}\begin{split}
      \lim_{n\rightarrow\infty}\frac{\Norm{x^{(f)}-x^*}^2}{n}=\lim_{n\rightarrow\infty}\frac{\Norm{(f(\hat{A})-A^{-1})b}^2}{n}=\lim_{n\rightarrow\infty}\frac{\Norm{f(\hat{A})-A^{-1}}_F^2}{n},
    \end{split}
  \end{equation}
  which is minimized for $f(x)=f^*_{1/t}(x).$
\item $b=Ax^*,x^*\sim \mathcal{N}(0,I_n)$. Similarly using Lemma \ref{lp_lemma} we see that:
  
  \begin{equation}\begin{split}
      \lim_{n\rightarrow\infty}\frac{\Norm{x^{(f)}-x^*}^2}{n}=\lim_{n\rightarrow\infty}\frac{\Norm{(f(\hat{A})A-I_n)x^*}^2}{n}=\lim_{n\rightarrow\infty}\frac{\Norm{f(\hat{A})A-I_n}_F^2}{n}.
    \end{split}
  \end{equation}
  
  Using exactly the same argument as in Proposition \ref{prop:diff_losses} we see that the limit
  is almost surely $$1-2\int f(x)\frac{v_{t}(x)}{\pi}dx+\int f^2(x)\frac{v_{t^2}(x)}{\pi}dx,$$
  which is minimized for $f(x)=f_{t}^*(x)/f_{t^2}^*(x).$
\end{enumerate}

For $H=(\delta_1+\delta_{10})/2,n=500$ we plot in Figure \ref{fig:syst1} the normalized
mean-squared-error (which we define as $\Norm{x^{(f)}-x^*}^2/\Norm{x^*}^2$) for several values of
$\sigma^2.$

\begin{figure}
    \centering
    \begin{subfigure}[b]{0.49\textwidth}
    \includegraphics[width=\textwidth]{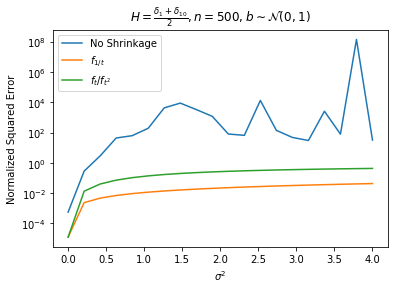}
    \label{fig:syst1.1}
    \caption{$b\sim\mathcal{N}(0,I_n)$}

    \end{subfigure}
    \hfill
    \begin{subfigure}[b]{0.49\textwidth}
    \centering
    \includegraphics[width=\textwidth]{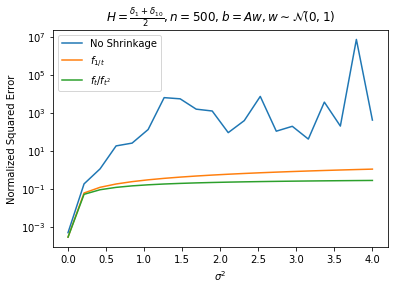}
    \label{fig:syst1.2}
    \caption{$x^*\sim\mathcal{N}(0,I_n)$}

    \end{subfigure}
    \caption{$\Norm{x^{(f)}-x^*}^2/\Norm{x^*}^2$ for different choices of $f$ and $H=(\delta_1+\delta_{10})/2.$}
    \label{fig:syst1}
\end{figure}

We repeat the experiment for $H=(\delta_1+\delta_4+\delta_9)/3,n=200.$ The results can be seen in
Figure \ref{fig:syst2}.

\begin{figure}
    \centering
    \begin{subfigure}[b]{0.49\textwidth}
    \includegraphics[width=\textwidth]{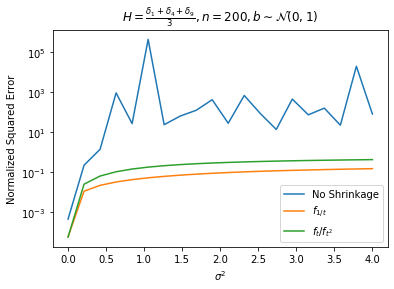}
    \label{fig:syst2.1}
    \caption{$b\sim\mathcal{N}(0,I_n)$}

    \end{subfigure}
    \hfill
    \begin{subfigure}[b]{0.49\textwidth}
    \centering
    \includegraphics[width=\textwidth]{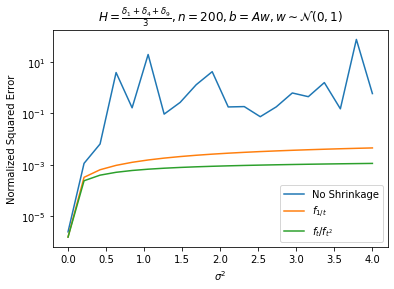}
    \label{fig:syst2.2}
    \caption{$x^*\sim\mathcal{N}(0,I_n)$}

    \end{subfigure}
    \caption{$\Norm{x^{(f)}-x^*}^2/\Norm{x^*}^2$ for different choices of $f$ and $H=(\delta_1+\delta_4+\delta_9)/3.$}
    \label{fig:syst2}
\end{figure}

In both cases we see that $\hat{A}$ becomes eventually ill-conditioned, if $\sigma^2$ increases. As
expected, if $b\sim\mathcal{N}(0,I_n)$ the first shrinkage outperforms the second at all noise
levels, while for $x^*\sim\mathcal{N}(0,I_n)$ the opposite is true.

\subsection{Experiments for Algorithm \ref{algo:MC_shrink}}

We consider the problem of estimating $A,A^{-1}$ and $\sqrt{A}$ in Frobenius norm, when we only have
access to $\hat{A}.$ For $n=500$ and several values of $\sigma$ we generate $\hat{A}=A+\sigma
n^{-1/2}Z,$ where $Z$ is a standard Gaussian Wigner matrix. Here $A$ is chosen as a diagonal matrix
with diagonal entries chosen uniformly at random from $\{1,4,9\}.$ We plot for
$h(t)=t,h(t)=1/t,h(t)=\sqrt{t}$ the oracle error and the error that can be achieved by using
Algorithm \ref{algo:H_rec} to recover the eigenvalues of $A$ and Algorithm \ref{algo:MC_shrink} with
$K=1$ to perform nonlinear shrinkage. We see that in all cases the error achieved by our algorithm
is very close to the oracle. For $\sigma ^2$ large, notice that the problem of eigenvalue recovery
for $A$ becomes increasingly harder, hence the error in estimation of $\lambda_1,\cdots,\lambda_n$
increases. This can lead to problems for the function $h(t)=1/t$ which is unbounded near 0, hence we
clip all the recovered eigenvalues that we get from Algorithm \ref{algo:H_rec} to be at least
0.3. Notice that for $h(t)=1/t,h(t)=\sqrt{t}$ we do not plot the no shrinkage

\begin{figure}
    \centering
    \begin{subfigure}[b]{0.3\textwidth}
    \includegraphics[width=\textwidth]{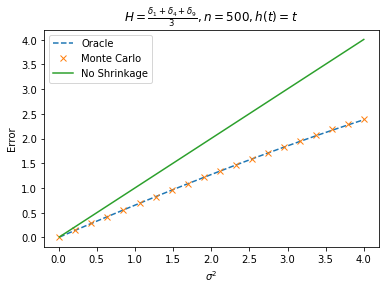}
    \label{fig:estx}
    \caption{Algorithm \ref{algo:MC_shrink} for $h(t)=t.$}
    \end{subfigure}
    \hfill
    \begin{subfigure}[b]{0.3\textwidth}
    \centering
    \includegraphics[width=\textwidth]{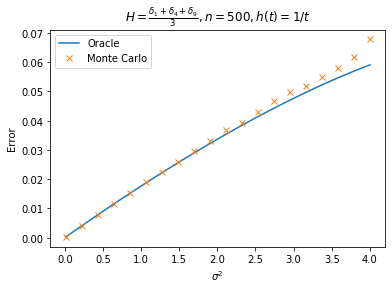}
    \label{fig:est_inv}
    \caption{Algorithm \ref{algo:MC_shrink} for $h(t)=1/t.$}

    \end{subfigure}
    \hfill
    \begin{subfigure}[b]{0.3\textwidth}
    \centering
    \includegraphics[width=\textwidth]{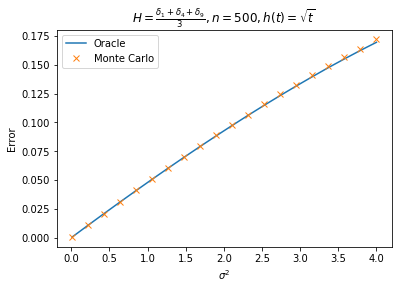}
    \label{fig:est_sqrt}
    \caption{Algorithm \ref{algo:MC_shrink} for $h(t)=\sqrt{t}.$}

    \end{subfigure}
    \caption{Algorithm \ref{algo:MC_shrink} Experiment for $H=(\delta_1+\delta_{4}+\delta_9)/3.$ We plot the oracle and shrinkage errors versus $\sigma$ for $h(t)=t,1/t,\sqrt{t}.$}
    \label{fig:Est}
\end{figure}




\section{Proofs}\label{sec:proofs}

\subsection{Proofs for Section \ref{sec:rmt}}

We start by presenting a well-known lemma for the tails of a standard Gaussian random variable.
\begin{lemma}\label{lem:gaussian_tail}
For any $M>0$ and $Z\sim\mathcal{N}(0,1)$ we have $$\mathbb{P}(\abs{Z}>M)\leq 2M^{-1}\exp{(-x^2/2)}.$$
\end{lemma}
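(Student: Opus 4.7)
The plan is to prove this standard Gaussian tail bound via the classical Mills-ratio-style argument. I start from the identity
\[
\mathbb{P}(\abs{Z}>M) = 2\mathbb{P}(Z>M) = \frac{2}{\sqrt{2\pi}}\int_M^\infty e^{-x^2/2}\,dx,
\]
valid for $M>0$ since $Z$ is symmetric.

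The key step is to insert the factor $x/M \geq 1$ into the integrand, which is valid on the range of integration. This yields
\[
\int_M^\infty e^{-x^2/2}\,dx \;\leq\; \int_M^\infty \frac{x}{M}\, e^{-x^2/2}\,dx \;=\; \frac{1}{M}\bigl[-e^{-x^2/2}\bigr]_M^\infty \;=\; \frac{1}{M}e^{-M^2/2},
\]
where the antiderivative is immediate since $x\,e^{-x^2/2} = -\frac{d}{dx}e^{-x^2/2}$. Plugging back in,
\[
\mathbb{P}(\abs{Z}>M) \;\leq\; \frac{2}{\sqrt{2\pi}}\cdot\frac{1}{M}\,e^{-M^2/2} \;\leq\; \frac{2}{M}e^{-M^2/2},
\]
using $1/\sqrt{2\pi}<1$. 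The statement as written contains a variable $x$ in the exponent that is presumably a typo for $M$, which is what the argument naturally gives.

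There is no real obstacle here: the only ingredient is the Mills-ratio trick (multiplying by $x/M\geq 1$ to make the integrand an exact derivative), and the constant $2$ absorbs both the symmetry factor and the $\sqrt{2\pi}$ in the denominator. No appeal to any earlier result in the paper is needed.
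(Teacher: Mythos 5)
Your proof is correct and follows essentially the same Mills-ratio argument as the paper (multiply the integrand by $x/M\ge 1$ and integrate exactly). You are in fact slightly more careful than the paper's own write-up, which drops the $1/\sqrt{2\pi}$ normalization in the density, and you rightly flag that the $x$ in the statement's exponent should read $M$.
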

\begin{proof}
We have \begin{equation}\begin{split}
    \mathbb{P}(\abs{Z}>M)=2\mathbb{P}(Z>M)=2\int _M^\infty \exp{(-x^2/2)}dx\\\leq 2\int_M^\infty\frac{x}{M}\exp{(-x^2/2)}dx=2M^{-1}\exp{(-x^2/2)}.
\end{split}
\end{equation}
\end{proof}
We will need the following lemma which is adapted from Lemma 7.8, Lemma 7.9 and Lemma 7.10 from
\cite{yau_dynamical}.
\begin{lemma}\label{lp_lemma}
Let $q\geq 2$ and $X_1,\cdots,X_N,Y_1,\cdots,Y_N$ be independent random variables with mean 0,
variance 1 and $2q$-th moment bounded by $c_0$. Then, for any deterministic $(b_i)_{1\leq i\leq N},
(a_{ij})_{1\leq i,j\leq N}$ we have for some positive constant $C_q=C_q(c_0)$:

\begin{equation}
\Norm{\sum_i b_i (X_i^2-1)}_q\leq C_q (\sum_i \abs{b_i}^2)^{\frac{1}{2}}
\end{equation}

\begin{equation}
\Norm{\sum_{i,j}a_{ij}X_iY_j}_q \leq C_q (\sum_{i,j}a_{ij}^2)^{\frac{1}{2}}
\end{equation}

\begin{equation}
\Norm{\sum_{i\neq j}a_{ij}X_iX_j}_q\leq C_q (\sum_{i\neq j}a_{ij}^2)^{\frac{1}{2}}
\end{equation}
\end{lemma}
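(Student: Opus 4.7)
\medskip
\noindent\textbf{Proof proposal.} All three inequalities are standard Rosenthal-type moment bounds for linear and second-order chaos forms in independent random variables, and the plan is to prove them in order, with each subsequent one reducing (at the exponent $q/2$) to the earlier ones. The two workhorses are the Rosenthal inequality for sums of independent centered random variables, which asserts that for $q \ge 2$ and independent mean-zero $W_1,\dots,W_N$,
\[
\Big\| \sum_i W_i \Big\|_q \le C_q \Big[\Big(\sum_i E W_i^2\Big)^{1/2} + \Big(\sum_i E|W_i|^q\Big)^{1/q}\Big],
\]
together with its martingale-difference analogue (Burkholder--Davis--Gundy). I will also repeatedly use the elementary inclusion $\|c\|_q \le \|c\|_2$ for sequences when $q \ge 2$.

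The first inequality is immediate: apply Rosenthal to $W_i := b_i(X_i^2-1)$. Since $E X_i^2 = 1$ and $E|X_i|^{2q} \le c_0$, one has $E W_i^2 \le c_0\, b_i^2$ and $E|W_i|^q \le C(c_0)\, |b_i|^q$, so the bound reads $C_q\big[(\sum_i b_i^2)^{1/2} + (\sum_i |b_i|^q)^{1/q}\big] \le 2 C_q (\sum_i b_i^2)^{1/2}$.

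For the second inequality I condition on $X=(X_i)_i$ and apply Rosenthal in the $Y$-variables to $T \mid X = \sum_j c_j(X) Y_j$ with coefficients $c_j(X) := \sum_i a_{ij} X_i$. This yields $E[|T|^q \mid X] \le C_q \big(\sum_j c_j(X)^2\big)^{q/2}$ (after absorbing the $\ell^q$-contribution into the $\ell^2$-one). Taking expectations and expanding with $M := A A^\top$,
\[
\sum_j c_j(X)^2 \;=\; \|A\|_F^2 \;+\; \sum_i M_{ii}(X_i^2 - 1) \;+\; \sum_{i_1 \neq i_2} M_{i_1 i_2} X_{i_1} X_{i_2},
\]
so Minkowski in $L^{q/2}$ combined with the first inequality (for the diagonal remainder) and the third inequality at exponent $q/2$ (for the off-diagonal), along with the bookkeeping $\sum_i M_{ii}^2 \le \|A\|_F^4$ and $\sum_{i_1 \neq i_2} M_{i_1 i_2}^2 \le \|A\|_F^4$, closes the argument.

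For the third inequality I symmetrize $\tilde a_{ij} = (a_{ij} + a_{ji})/2$ so that $S = 2 \sum_{i < j} \tilde a_{ij} X_i X_j$, and then decompose $S = \sum_k D_k$ as a martingale-difference sum in the filtration $\mathcal{F}_k = \sigma(X_1,\dots,X_k)$, with $D_k = 2 X_k \sum_{i<k} \tilde a_{ik} X_i$. Burkholder--Davis--Gundy gives
\[
\|S\|_q \le C_q \Big\|\Big(\sum_k E[D_k^2 \mid \mathcal{F}_{k-1}]\Big)^{1/2}\Big\|_q + C_q \Big(\sum_k E|D_k|^q\Big)^{1/q},
\]
with $E[D_k^2 \mid \mathcal{F}_{k-1}] = 4(\sum_{i < k} \tilde a_{ik} X_i)^2$. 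The first term is again a quadratic form in $X$ whose $L^{q/2}$-norm is bounded by the expansion-and-reduction argument used in step two, and the jump term is controlled by H\"older together with the $2q$-moment hypothesis on $X_k$. The main obstacle is the apparently circular dependence between inequalities two and three at exponent $q/2$; I resolve it by induction on $q$, with base case $q = 2$ where all three inequalities follow from a direct variance computation (cross terms vanish by mean-zero and independence), and with the inductive step proceeding along the dyadic sequence $q \in \{2,4,8,\dots\}$, the intermediate exponents being handled by the monotonicity $\|\cdot\|_q \le \|\cdot\|_{q'}$ for $q \le q'$. This yields constants $C_q = C_q(c_0)$ defined recursively.
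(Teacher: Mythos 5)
The paper itself does not prove this lemma; it cites Lemmas 7.8--7.10 of Erd\H{o}s--Yau \emph{A Dynamical Approach to Random Matrix Theory}, where the bounds are obtained by expanding high even moments and counting index coincidences combinatorially. Your route via the scalar Rosenthal inequality, conditioning, and the Burkholder--Rosenthal martingale inequality, with the quadratic-form reduction $\sum_j c_j(X)^2 = \|A\|_F^2 + \sum_i M_{ii}(X_i^2-1) + \sum_{i_1\neq i_2} M_{i_1 i_2} X_{i_1} X_{i_2}$ where $M = AA^{\intercal}$, together with the bookkeeping $\sum_i M_{ii}^2 \le \|A\|_F^4$ and $\sum_{i_1\neq i_2} M_{i_1 i_2}^2 \le \|M\|_F^2 \le \|A\|_F^4$, is a genuinely different and perfectly standard modern alternative; it trades combinatorics for two black-box martingale inequalities and yields constants with a clean recursive structure.

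There is, however, one real gap in the induction scheme you describe. You propose to establish the result for $q\in\{2,4,8,\dots\}$ and then pass to intermediate exponents by $\|\cdot\|_q \le \|\cdot\|_{q'}$ with $q' = 2^{\lceil \log_2 q\rceil}$. But the constant $C_{q'}$ in the lemma at level $q'$ is allowed to depend on a bound for the $2q'$-th moments, whereas the hypothesis at level $q$ only controls the $2q$-th moments, which is strictly weaker when $q < q'$; the $2q'$-th moments may be large or infinite. So the monotonicity step is not available under the stated hypotheses. The fix is to abandon the dyadic restriction and run the recursion $q \mapsto q/2$ for all real $q$: for $q \ge 4$ the inner quadratic form is handled by inequalities 1 and 3 at exponent $q/2 \ge 2$ (the required $q$-th moments are bounded by $c_0^{1/2}$ via Cauchy--Schwarz), and for $q \in [2,4)$ one uses $\|\cdot\|_{q/2} \le \|\cdot\|_2$ on the centered quadratic form and then the base-case $q=2$ variance computation, which only needs fourth moments and these are always controlled since the hypothesis assumes $q\ge 2$. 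With that adjustment the argument closes for every $q\ge 2$ and the constants depend only on $q$ and $c_0$ as claimed.
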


\begin{proof}[Proof of Theorem \ref{thm:basic_tool}]
The proof involves two main steps.

\begin{enumerate}
    \item \textbf{Step 1:} Show that the theorem holds for if $Z\sim  GOE(n).$
    \item \textbf{Step 2:} Reduce the problem to the case of bounded random variables as entries of $Z.$
    \item \textbf{Step 3:} Show that the results are universally true and independent of the distribution of $Z$ as long as $Z,A$ are asymptotically free.
\end{enumerate}

\textbf{Step 1:} For $Z \sim GOE(n)$ (which is invariant under conjugation by an orthogonal matrix) it is enough to consider the case of diagonal matrix $A.$ If $A=\diag{A_1,\cdots,A_n},$ then using the Schur complement formula we have that the $i$-th diagonal entry of $h(A)\left(\hat{A}-z\right)^{-1}$ is given by $$h(A_i)/\left(A_{i}-z-\sigma n^{-1/2}Z_{ii}-\sigma^2 n^{-1}a_i^\intercal\left(A_{-i}+n^{-1/2}Z_{-i}-z\right)^{-1}a_i\right).$$ Here $A_{-i}$ is the $(n-1)\times (n-1)$ matrix that we get if we omit the $i$-th element $A_i$ of $A,$ $a_i\in\mathbb{R}^{(n-1)}$ the $i$-th row of $Z$ if we omit the diagonal element. 

We have from Lemma \ref{lem:gaussian_tail} for any fixed $\epsilon>0$ and $n\geq \epsilon^{-2}:$ $$\mathbb{P}\left(\max_{1\leq i\leq n}\abs{Z_{ii}}>\epsilon\sqrt{n}\right)\leq n\mathbb{P}\left(\abs{Z_{ii}}>\epsilon\sqrt{n}\right)=\mathcal{O}\left(n\exp{(-n\epsilon^2/2)}\right).$$ Since $\epsilon$ was arbitrary, we conclude by the Borel-Cantelli lemma that $$\max_{1\leq i\leq n}\abs{Z_{ii}}/\sqrt{n}\xrightarrow{a.s.}0.$$

Similarly, by Lemma \ref{lp_lemma} for $q=3$, $$\max_{1\leq i\leq n}n^{-1}\abs{ a_i^\intercal\left( A_{-i}+\sigma n^{-1/2}Z_{-i}-z\right)^{-1}a_i - tr\left(\left(A_{-i}+\sigma n^{-1/2}Z_{-i}-z\right)^{-1}\right)}.$$ Using the Cauchy interlacing formula (\cite{tao2012topics}), we see that it must also be true that

\begin{equation}\begin{split}
\max_{1\leq i\leq n}\abs{n^{-1}tr\left(\left(A_{-i}+\sigma n^{-1/2}Z_{-i}-z\right)^{-1}\right)-m_{\hat{\mu}}(z)}\xrightarrow{a.s.}0,
\end{split}
\end{equation}

as for fixed $z\in\mathbb{C}^+$ the differences $$\abs{tr\left(\left(A_{-i}+\sigma n^{-1/2}Z_{-i}-z\right)^{-1}\right)-tr\left(\left(A+\sigma n^{-1/2}Z-z\right)^{-1}\right)(z)}$$ are going to be uniformly bounded (due to the interlacing phenomenon).

As a consequence, we see that $$n^{-1}tr\left(h(A)\left(\hat{A}-z\right)^{-1}\right)=\omicron{(1)}+\sum_{i=1}^n\frac{h(A_i)}{t_i-z-\sigma^2m_{\hat{\mu}}(z)},$$
 which proves the result for $z\in\mathbb{C}^+$ and $Z\sim  GOE(n).$

\textbf{Step 2:} Fix $M>0.$ For this step we assume, in order to slightly simplify the formulas, that without loss of generality that $\sigma=1.$ Define $Z_{i,j}^{(M)}=Z_{ij}\mathbb{I}_{\abs{Z_{ij}}<M}.$ We also define $\hat{A}^{(M)} = A + n^{-1/2}Z^{(M)}.$ We have 
\begin{equation}\label{eq:reduce_Z}\begin{split}\abs{n^{-1}tr\left(h(A)(\hat{A}^{(M)}-z)^{-1}\right)
-n^{-1}tr\left(h(A)(\hat{A}-z)^{-1} \right)}\\
=\frac{\abs{tr\left(h(A)(\hat{A}-z)^{-1}(Z-Z^{(M)})(\hat{A}^{(M)}-z)^{-1}\right)}}{n\sqrt{n}}\\=\frac{\abs{tr\left((\hat{A}^{(M)}-z)^{-1}h(A)(\hat{A}-z)^{-1}(Z-Z^{(M)})\right)}}{n\sqrt{n}}\\
\leq \frac{\Norm{(\hat{A}^{(M)}-z)^{-1}h(A)(\hat{A}-z)^{-1}}_{op}\Norm{(Z-Z^{(M)})}_F}{n}.
\end{split}
\end{equation}

Notice that here we have used the fact that for two $n\times n$ matrices $M_1,M_2$ we have $$\abs{tr\left(M_1M_2\right)}\leq \Norm{M_1}_F\Norm{M_2}_F\leq \sqrt{n}\Norm{M_1}_{op}\Norm{M_2}_F,$$ where the first inequality follows from Cauchy-Schwartz in $\mathbb{R}^{n\times n}$ and the second one from the fact that the Frobenius norm of a real matrix is the $l_2-$norm of its singular values.

We conclude from \eqref{eq:reduce_Z} that for a fixed bounded continuous function and a fixed complex number $z$ in the upper half-plane we have \begin{equation}\begin{split}\abs{n^{-1}tr\left(h(A)(\hat{A}^{(M)}-z)^{-1}\right)
-n^{-1}tr\left(h(A)(\hat{A}-z)^{-1} \right)}=\mathcal{O}\left(n^{-1}\Norm{Z-Z^{(M)}}_F\right).
\end{split}
\end{equation}
We now observe that $$\mathbb{E}\left[\Norm{\left(Z-Z^{(M)}\right)}_F^2\right]=n^2\mathbb{E}\left[Z_{11}^2;\abs{Z_{11}}\geq M\right]$$ and $$\Var{\Norm{Z-Z^{(M)}}_F^2}=\mathcal{O}\left(n^2\mathbb{E}\left[Z_{11}^4;\abs{Z_{11}}>M\right]\right).$$  Fix any $\epsilon>0$ and take $M$ large enough such that $\mathbb{E}[Z_{11}^2;\abs{Z_{11}}>M]\leq \epsilon^2/2$ and $\mathbb{E}\left[Z_{11}^4;\abs{Z_{11}}>M\right]\leq 1.$ Then we have \begin{equation}\label{eq:bound_Z_diff}\begin{split}&\mathbb{P}\left(n^{-1}\Norm{Z-Z^{(M)}}_F>\epsilon\right)=\mathbb{P}\left(n^{-2}\Norm{Z-Z^{(M)}}_F^2>\epsilon^2\right)\\
&\leq\mathbb{P}\left(n^{-2}\Norm{Z-Z^{(M)}}_F^2-\mathbb{E}\left[n^{-2}\Norm{Z-Z^{(M)}}_F^2\right]>\epsilon^2/2\right)=\mathcal{O}\left(n^{-2}\epsilon^{-4}\right).\end{split}\end{equation}
Using the Borel-Cantelli lemma we conclude that. almost surely, $n^{-1}\Norm{Z-Z^{(M)}}_F\leq \epsilon$ eventually. 
Using \eqref{eq:reduce_Z} we see that for $M$ large enough we have eventually almost surely
\begin{equation}\begin{split}\abs{n^{-1}tr\left(h(A)(\hat{A}^{(M)}-z)^{-1}\right)
-n^{-1}tr\left(h(A)(\hat{A}-z)^{-1} \right)}\\\leq \epsilon \Norm{(\hat{A}^{(M)}-z)^{-1}h(A)(\hat{A}-z)^{-1}}_{op}\leq \epsilon Im(z)^{-2}\Norm{h}_{\infty}.
\end{split}\end{equation}

To finish this step, we define  $\mu_M=\mathbb{E}\left[Z_{ij}^{(M)}\right],\sigma_M=\sqrt{\Var{Z_{ij}^{(M)}}}$ and $\Tilde{Z}_{ij}=\left( Z_{ij}^{(M)}-\mu_M\right)/\sigma_M,$ which are random variables with mean 0 and variance 1. A similar argument shows that, if $\Tilde{A}=A+n^{-1/2}\Tilde{Z}$ and $M$ is large enough, then eventually almost surely we have \begin{equation}\label{eq:bound_M}
\abs{n^{-1}tr\left(h(A)\left(\hat{A}^{(M)}-z\right)^{-1}\right)-n^{-1}tr\left(h(A)\left(\Tilde{A}-z\right)^{-1}\right)}\leq\epsilon.
\end{equation}

To see why, using the same bound as in \eqref{eq:reduce_Z}, we see that \begin{equation}\label{eq:reduce_Z2}\begin{split}&\abs{n^{-1}tr\left(h(A)\left(\hat{A}^{(M)}-z\right)^{-1}\right)-n^{-1}tr\left(h(A)\left(\Tilde{A}-z\right)^{-1}\right)}\leq \\
&\frac{\Norm{\left(\hat{A}^{(M)}-z\right)^{-1}h(A)\left(\Tilde{A}-z\right)^{-1}}_{op}\Norm{\Tilde{Z}-Z^{(M)}}_F}{n}\leq n^{-1} \Norm{h}_{\infty}Im(z)^{-2}\Norm{\Tilde{Z}-Z^{(M)}}_F.
\end{split}
\end{equation}

It remains to bound $\Tilde{Z}-Z^{(M)}$ in Frobenius norm. Let $e=(1,\cdots,1)^\intercal\in\mathbb{R}^n.$ Then, $$\Tilde{Z}-Z^{(M)}=\Tilde{Z}^{(M)}(1-\sigma_M^{-1})-\frac{\mu_M}{\sigma_M}ee^\intercal.$$ Using this we get \begin{equation}\label{eq:reduce_Z_3}\begin{split}
\Norm{\Tilde{Z}-Z^{(M)}}_F^2=(1-\sigma_M^{-1})^2\Norm{Z^{(M)}}_F^2-2\frac{\mu_M}{\sigma_M} (1-\sigma_M^{-1})e^\intercal Z^{(M)}e+n^2\frac{\mu_M^2}{\sigma_M^2}\\
\leq n^2\frac{\mu_M^2}{\sigma_M^2}-2\frac{\mu_M}{\sigma_M}(1-\sigma_M^{-1})e^\intercal Z^{(M)}e+(1-\sigma_M^{-1})\left(\Norm{Z}_F+\Norm{Z-Z^{(M)}}_F\right)^2
\end{split}
\end{equation}
Now we know that:
\begin{enumerate}
\item $\lim_{M\rightarrow\infty}\mu_M=0$ and $\lim_{M\rightarrow\infty}\sigma_M=1$ from the dominated convergence theorem.
\item $n^{-1}\Norm{Z-Z^{(M)}}$ can be made arbitrarily small eventually (by choosing $M$ large enough), using \eqref{eq:bound_Z_diff} and the Borel-Cantelli lemma.
\item $n^{-1}\Norm{Z}_F\leq \sqrt{n^{-1}\Norm{Z}_{op}},$ which converges to $2$ almost surely (\cite{tao2012topics}).
\item Finally,
  \[
  n^{-2}e^\intercal Z^{(M)}e=\frac{\sum_{1\leq i,j\leq
      n}Z_{ij}\mathbb{I}_{\abs{Z_{ij}}<M}}{n^2}\xrightarrow{a.s.}\mathbb{E}\left[Z_{11};\abs{Z_{11}}<M\right]
  \]
  by the strong law of large numbers.
    
\end{enumerate}

Taking all of the above into consideration, we see that for $M$ large enough we see that for $M$ large enough we have eventually almost surely that the bound from \eqref{eq:bound_M} is true. Since $\Tilde{Z}$ is a Wigner ensemble with bounded entries, we have reduced the problem to the case of bounded random variables.

\textbf{Step 3:} We now show that under the assumptions in Subsection \ref{subsec:assum} the theorem
is also true. The idea is to show, using free-probabilistic tools, that if $\Tilde{Z}$ is a Wigner
ensemble with all moments finite, the the limit of the trace functionals of interest depends on the
noise distribution via only its first two moments. First of all, notice that it is enough to prove
the result for a polynomial $h$ and the extend to a general continuous function by a simple density
argument. As a result, it is enough to consider $h(t)=t^k,k\in\mathbb{N}$ and show that
$n^{-1}tr\left(A^k\left(\hat{A}-z\right)^{-1}\right)$ has a limit almost surely and the limit does
not depend on the distribution of $Z.$ Similarly, it is enough to show that for any $m\in\mathbb{N}$
the trace functional $n^{-1}tr\left(A^k\hat{A}^m\right)$ has a limit almost surely and the limit
does not depend on the distribution of $Z.$ Writing $\hat{A}^m=\left(A+\sigma n^{-1/2}Z\right)^m$
and expanding in monomial terms we see that $n^{-1}tr\left(A^k\hat{A}^m\right)$ is the sum of a
finite number of terms all of which have the form $n^{-1}tr\left(A^{n_1}(\sigma
n^{-1/2}Z)^{m_1}\cdots A^{n_s}\left(n^{-1/2}Z\right)^{m_s}\right)$ for some $s\geq 1$ and
nonnegative integers $n_1,m_1,\cdots,n_s,m_s.$ Since $Z$ is a Wigner ensemble and $A$ is independent
of $Z,$ we conclude that $A,n^{-1/2}Z$ are almost surely asymptotically free (Theorem 20 in
\cite{mingo2017free}). As a consequence, we have that all terms of the form
$n^{-1}tr\left(A^{n_1}(\sigma n^{-1/2}Z)^{m_1}\cdots A^{n_s}\left(n^{-1/2}Z\right)^{m_s}\right)$
converge almost surely and the limit depends only on the limiting spectral distributions of
$A,n^{-1/2}Z,$ which are given by $H$ and a semicircular distribution respectively. In particular,
the limit is independent of the distribution of $Z.$ We conclude that the limit is the same as with
the Gaussian assumption on $Z.$ This completes the proof.
\end{proof}

\subsection{Proofs for Section \ref{sec:algo}}

\begin{proof}[Proof of Theorem \ref{thm:opt_shrinkage_der}]

First of all, let $f$ be an analytic function on the complex plane. Then, we have using Cauchy's integral formula: $$\frac{tr\left(f(\hat{A})h(A)\right)}{n}=-\frac{1}{2\pi i }\oint_{\abs{z}=R} \frac{tr\left(h(A)\left(\hat{A}-z\right)^{-1}\right)}{n}f(z)dz,$$ where the integral is considered on a fixed circle centered at $0$ with radius $R$ such that eventually  $\Norm{\hat{A}}_{op}\leq R/2.$ Consider $$R_n(z)=\frac{tr\left(h(A)\left(\hat{A}-z\right)^{-1}\right)}{n}$$ and for a fixed $z$ let us denote by $R_{\infty}(z)$ the almost sure limit of $R_n(z)$ described in Theorem \ref{thm:basic_tool}. Then, for $n$ large enough we have almost surely that:

$$\abs{R_n(z)}=\abs{\frac{tr\left(h(A)\left(\hat{A}-z\right)^{-1}\right)}{n}}\leq \frac{\Norm{h(A)}_{op}}{(R-R/2)}=2R^{-1}\Norm{h(A)}_{op},$$

and $$\abs{R_n'(z)}=\abs{\frac{tr\left(h(A)\left(\hat{A}-z\right)^{-2}\right)}{n}}\leq \frac{\Norm{h(A)}_{op}}{(R-R/2)^2}=4R^{-2}\Norm{h(A)}_{op},$$ so on the circle $\{z\in\mathbb{C}:\abs{z}=R\}$ the sequence of functions $\{R_n(z)\}_{n\geq 1}$ almost surely consists of functions that are 
uniformly bounded and equicontinuous. Fix some $\epsilon>0$ and consider a finite subset $C\subset \{z\in\mathbb{C}:\abs{z}=R\}$ such that for any $z\in\mathbb{C}$ with $\abs{z}=R$ there exists $\Tilde{z}\in C$ such that $\abs{z-\Tilde{z}}<\epsilon.$ Since for any such $z,\Tilde{z}$ we have $$\abs{R_n(z)-R_{\infty}(z)}\leq \abs{R_n(z)-R_n(\Tilde{z})}+\abs{R_n(\Tilde{z})-R_{\infty}(\Tilde{z})}+\abs{R_{\infty}(\Tilde{z})-R_{\infty}(z)}$$ $$=\mathcal{O}\left(\epsilon + \sup_{\Tilde{z}\in C}\abs{R_n(\Tilde{z})-R_{\infty}(\Tilde{z})}\right),$$ we know that almost surely $$\limsup \sup_{\abs{z}=R}\abs{R_n(z)-R_{\infty}(z)}=\mathcal{O}(\epsilon).$$ Since $\epsilon$ was arbitrary we conclude that $R_n\xrightarrow{a.s.}R_{\infty}$ uniformly on $\{z\in\mathbb{C}:\abs{z}=R\}.$ Using this result we see that $$\frac{tr\left(h(A)f\left(\hat{A}\right)\right)}{n}\xrightarrow{a.s.}{-\frac{1}{2\pi i}}\oint_{\abs{z}=R} R_{\infty}(z)f(z)dz=-\frac{1}{2\pi i}\oint _{\Gamma_{\delta}}R_{\infty}(z)f(z)dz,$$ where we have changed the integral to be on a counterclockwise curve $\Gamma_{\delta}$ which we take to be a rectangle with vertices $\pm R\pm i \delta.$ Taking $\delta\downarrow 0$ we get 

\begin{equation}\begin{split}
&-\frac{1}{2\pi i}\oint R_{\infty}(z)f(z)dz\\
&=-\frac{1}{2\pi i}\lim_{\delta\downarrow 0}\left[\int _{-R}^R R_{\infty}(x-i\delta)f(x-i\delta) dx-\int _{-R}^R R_{\infty}(x+i\delta)f(x+i\delta)\right]\\
&=\int \frac{v_h(x)}{\pi}f(x)dx.
\end{split}
\end{equation}

In other words, we have shown that $$\frac{tr\left(f(\hat{A})h(A)\right)}{n}\xrightarrow{a.s.}\int \frac{v_h(x)}{\pi}f(x)dx$$ for $f$ analytic. Using a simple density argument we see that this result is actually true for any function $f$ that is continuous and bounded in an open set that contains the support of $\mu_{H,\sigma^2}$ and eventually all the eigenvalues of $\hat{A}.$

We now see that 

\begin{equation}\begin{split}
\frac{\Norm{f(\hat{A})-h(A)}_F^2}{n}=\frac{\Norm{f(\hat{A})}_F^2-2tr\left(f(\hat{A}h(A))+\Norm{h(A)}_F^2\right)}{n}\\ \xrightarrow{a.s.}\int f^2(x)\frac{v(x)}{\pi}dx-2\int \frac{v_h(x)}{\pi}f(x)dx+\int h^2(x)dH(x).
\end{split}
\end{equation}

Minimizing over $f$ we see that for $x$ in the support of $\mu_{H,\sigma^2}$ the minimizer satisfies $f^*(x)=v_h(x)/v(x).$ This completes the proof.
\end{proof}

\begin{proof}[Proof of Corollary \ref{corol:some_closed}]
\begin{enumerate}
    \item We have $$\int\frac{tdH(t)}{t-z-\sigma^2m_{\hat{\mu}}(z)}=1+\int\frac{z+\sigma^2m_{\hat{\mu}}(z)}{t-z-\sigma^2m_{\hat{\mu}}(z)}=1+(z+\sigma^2m_{\hat{\mu}}(z))m_{\hat{\mu}}(z),$$ where the last equality follows from Proposition \ref{prop:free_add_semi}.
    \item Using $$\frac{1}{t(t-z-\sigma^2m_{\hat{\mu}}(z))}=\frac{1}{z+\sigma^2m_{\hat{\mu}}(z)}\left[\frac{1}{t-z\sigma^2m_{\hat{\mu}}(z)}-\frac{1}{t}\right],$$ we have $$\int\frac{dH(t)}{t(t-z-\sigma^2m_{\hat{\mu}}(z))}=\frac{m_{\hat{\mu}}(z)-m_H(0)}{z+\sigma^2m_{\hat{\mu}}(z)},$$ so the formula for the asymptotically optimal shrinkage for $A^{-1}$ follows.
    \item We have

    \begin{equation}\begin{split}&\int\frac{t^2dH(t)}{t-z-\sigma^2m_{\hat{\mu}}(z)}\\
    &=\int \frac{t^2-(z+\sigma^2m_{\hat{\mu}}(z))^2}{t-z-\sigma^2m_{\hat{\mu}}(z)}dH(t)+(z+\sigma^2m_{\hat{\mu}}(z))^2\int \frac{dH(t)}{t-z-\sigma^2m_{\hat{\mu}}(z)}\\
    &=\int \left(t+z+\sigma^2m_{\hat{\mu}}(z)\right)dH(t)+(z+\sigma^2m_{\hat{\mu}}(z))^2m_{\hat{\mu}}(z)\\
    &=\int tdH(t)+z+\sigma^2m_{\hat{\mu}}(z)+(z+\sigma^2m_{\hat{\mu}}(z))^2m_{\hat{\mu}}(z).
    \end{split}
    \end{equation}

\end{enumerate}

\end{proof}

\begin{proof}[Proof of Theorem \ref{thm:mc_average}]
First of all, we observe that it is enough to prove the theorem for $K=1,$ where $\hat{Z}_1\sim\Tilde{\sigma} n^{-1/2}GOE(n).$ Hence we consider only that case and ignore the dependency on $k$ in the subscripts in Algorithm \ref{algo:MC_shrink}. We will write $\Tilde{\Lambda}=\diag{\Tilde{\lambda}_1,\cdots,\Tilde{\lambda}_n}$ for the diagonal matrix in Step 2 of Algorithm \ref{algo:MC_shrink}. We will denote by $\Tilde{m}_1\geq\cdots\geq \Tilde{m}_n$ the eigenvalues of $\Tilde{\Lambda}+\hat{Z}_1$. From Theorem \ref{thm:basic_tool} we know that: $$n^{-1}\sum_{i=1}^nd_i^*\delta_{\Tilde{m}_i}\xrightarrow{a.s.}\mu_h, $$ where $\mu_h$ is a finite measure with Stieltjes transform given by $$\int \frac{h(t)dH(t)}{t-z-\sigma^2m_{\hat{\mu}}(z)}.$$ In addition, we know that $n^{-1}\sum_{i=1}^n\delta_{\Tilde{m}_i}$ converges weakly almost surely to the additive free convolution of $H$ with a semicircular distribution with variance $\sigma^2,$ which is a probability measure $H\boxplus\rho_{sc;\sigma^2}$ without atoms (\cite{semi_free}). We conclude that for any $x_1,x_2\in\mathbb{R}$ we have $n^{-1}\sum_{i=1}^nd_i^*\mathbb{I}_{\Tilde{m}_i\in[x_1,x_2]}\xrightarrow{a.s.}\mu_h([x_1,x_2]).$ Similarly $n^{-1}\sum_{i=1}^nd_i^{(h)}\mathbb{I}_{\hat{\lambda}_i\in[x_1,x_2]}\xrightarrow{a.s.}\mu_h([x_1,x_2]).$ Since $H\boxplus\rho_{sc;\sigma^2}$ has no atoms, the proof is completed if we consider $x_1,x_2$ be the $a,b$-quantiles respectively of $\mu_{H,\sigma^2}$.

\end{proof}

\begin{proof}[Proof of Proposition \ref{prop:diff_losses}]

In the proof of Theorem \ref{thm:opt_shrinkage_der} we saw that, if $f$ satisfies the assumptions of
Proposition \ref{prop:diff_losses}, then almost surely
\begin{equation}\label{eq:trace_func_lem}
  \lim_{n\rightarrow\infty}n^{-1}tr\left(f(\hat{A})h(A)\right)=\int f(x) \frac{v_h(x)}{\pi}dx.
\end{equation}

\begin{enumerate}
\item
  \begin{equation}\begin{split}
      n^{-1}L^{st}(A,f(\hat{A})) = \frac{1}{n}tr\left(A^{-1}f(\hat{A})\right)-1+\frac{1}{n}\sum_{i=1}^n \log \lambda_i-\frac{1}{n}\sum_{i=1}^n\log f(\hat{\lambda_i})\\ \xrightarrow{a.s.}\int f(x)\frac{v_{1/t}(x)}{\pi}dx-1+\int \log t dH(t)-\int \log f(x) \frac{v(x)}{\pi}dx,
    \end{split}
  \end{equation}
  where we used \eqref{eq:trace_func_lem} and the fact that the spectrum of $A$ converges weakly almost surely to $H,$ while the spectrum of $\hat{A}$ converges weakly almost surely to the measure $H\boxplus\rho_{sc;\sigma^2}$ with density $v(x)/\pi.$ 
  Minimizing the integrand with respect to $f(x)$ for $x$ fixed is straightforward using derivatives and gives the desired result.
    \item \begin{equation}\begin{split}
    n^{-1}L^{st}(f(\hat{A}),A)= \frac{1}{n}tr\left(f(\hat{A})^{-1}A\right)-1+\frac{1}{n}\sum_{i=1}^n\log f(\hat{\lambda_i})-\frac{1}{n}\sum_{i=1}^n \log \lambda_i\\ \xrightarrow{a.s.}\int \frac{1}{f(x)}\frac{v_t(x)}{\pi}dx-\int \log t dH(t)+\int \log f(x) \frac{v(x)}{\pi}dx-1.
    \end{split}
    \end{equation}
    Minimizing with respect to $f$ is again straightforward.
    \item Using \eqref{eq:trace_func_lem} we get: \begin{equation}\begin{split}
    n^{-1}L^{div}(A,f(\hat{A}))=\frac{tr\left(Af(\hat{A})^{-1}\right)}{n}+\frac{tr\left(A^{-1}f(\hat{A})\right)}{n}-2\\ \xrightarrow{a.s.}\int f(x)\frac{v_{1/t}(x)}{\pi}dx+\int \frac{1}{f(x)}\frac{v_{t}(x)}{\pi}dx-2.
    \end{split}
    \end{equation}
    \item \begin{equation}\begin{split}
    n^{-1}\Norm{A^{-1}f(\hat{A})-I}_F^2=\frac{tr\left(A^{-2}f(\hat{A})^2\right)}{n}-2\frac{tr\left(A^{-1}f(\hat{A})\right)}{n}+1\\ \xrightarrow{a.s.}\int f^2(x)\frac{v_{1/t^2}(x)}{\pi}dx-2\int f(x)\frac{v_{1/t}(x)}{\pi}dx+1.
    \end{split}
    \end{equation}
    
    \item \begin{equation}\begin{split}
    n^{-1}\Norm{Af(\hat{A})^{-1}-I}_F^2=\frac{tr\left(A^2f(\hat{A})^{-2}\right)}{n}-2\frac{tr\left(Af(\hat{A})^{-1}\right)}{n}+1\\ \xrightarrow{a.s.}1-2\int \frac{1}{f(x)}\frac{v_t}{\pi}dx+\int \frac{1}{f^2(x)}\frac{v_{t^2}(x)}{\pi}dx.
    \end{split}
    \end{equation}

\end{enumerate}
\end{proof}

\subsection{Proofs for Section \ref{sec:recovery}}

\begin{proof}[Proof of Theorem \ref{thm:recovery1}]
  
  \begin{enumerate}
  \item We take $t_i=\lambda_i.$ Then, $$n^{-1}\sum_{i=1}^n\delta_{\hat{t}_i}\xrightarrow{a.s.}H\boxplus\rho_{sc;\sigma^2}.$$
    In addition, $$n^{-1}\sum_{i=1}^n\delta_{\hat{\lambda}_i}\xrightarrow{a.s.}H\boxplus\rho_{sc;\sigma^2}.$$
    
    Finally, applying Weyl's inequality ((1.54) in \cite{tao2012topics}) to $\hat{A}=A+\sigma n^{-1/2}Z$ and using the fact that for any $\epsilon>0$ the eigenvalues of $n^{-1/2}Z$ eventually lie in $[-2-\epsilon,2+\epsilon]$ almost surely,  we have $\lambda_i-2\sigma+\omicron{(1)}\leq \hat{\lambda}_i\leq \lambda_i+2\sigma+\omicron{(1)},$ so that $\hat{\lambda}_i$ are almost surely uniformly bounded. Similarly for $\hat{t}_i.$ We conclude that for this choice of $t_i$'s the 2-Wasserstein distance of $$n^{-1}\sum_{i=1}^n(\hat{t}_i-\hat{\lambda}_i)^2\xrightarrow{a.s.}0.$$ The proof is completed.
    \item If we denote by $\nu_n=n^{-1}\sum_{i=1}^n\delta_{t_i^*}$ the probability measure that corresponds to the solution to the optimization problem in Algorithm \ref{algo:H_rec}, then we know that $\nu_{n}$ is tight sequence of probability measures. To see why, by Weyl's eigenvalue inequality and the fact that almost surely  the largest eigenvalue of $n^{-1/2}Z$ tends to 2 and the largest eigenvalue of $n^{-1/2}Z$ tends to -2 we get $t_i^*-2\sigma+\omicron{(1)}\leq \hat{t}_i^*\leq t_i^*+2\sigma+\omicron{(1)},$ so 
    \begin{equation}\label{eq:tightness}
    \abs{t_i^*-\hat{\lambda}_i}\leq \abs{\hat{t}_i^*-\hat{\lambda}_i}+2\sigma +\omicron{(1)}.
    \end{equation}
    For any large $M>0$ fixed we now see from \eqref{eq:tightness} that \begin{equation}\begin{split}
    M^2\frac{\abs{\{i:1\leq i\leq n, \abs{t_i-\lambda_i}>M\}}}{n}\leq n^{-1}\sum_{i=1}^n(t_i-\hat{\lambda}_i)^2\mathbb{I}\{\abs{t_i-\hat{\lambda}_i}>M\}\\
    \leq n^{-1}\left(\sqrt{\sum_{i=1}^n(\hat{t}_i^*-\hat{\lambda}_i)^2}+2\sigma \sqrt{n}+\omicron{(\sqrt{n})}\right)^2=\left(R_n({\sigma})+2\sigma+\omicron{(1)}\right)^2.
    \end{split}
    \end{equation}
    On the other hand, we know that any weak subsequential limit of $\nu_n$ has to be equal to $H,$ as from the previous part of the theorem we know that $n^{-1}\sum_{i=1}^n\delta_{t_i^*}\boxplus\rho_{sc;\sigma^2}$ converges weakly to $H\boxplus\rho_{sc;\sigma^2}.$ We conclude that $\nu_n\xrightarrow{\mathcal{D}}H$ almost surely. 
   Fix $\epsilon>0$ and consider $M>0$ large enough (to be determined later). For the moment we assume that $[-M/2,M/2]$ contains $[h_1,h_2]$ from Assumption 3 in section \ref{sec:intro}.  In addition, we assume that $\abs{\hat{\lambda}_i}\leq M/2$ for all $i.$ 
   
   
   Then, using the triangle inequality we have:
   \begin{equation}\begin{split}
   \sqrt{n^{-1}\sum_{i=1}^n(t_i^*)^2\mathbb{I}\{\abs{t_i^*}>M\}}\leq \sqrt{n^{-1}\sum_{i=1}^n(\hat{t}_i^*-\hat{\lambda}_i)^2}+\sqrt{n^{-1}\sum_{i=1}^n\hat{\lambda}_i^2\mathbb{I}\{\abs{t_i^*}>M\}}\\+\sqrt{n^{-1}\sum_{i=1}^n(\hat{t}_i^*-t_i^*)^2\mathbb{I}\{\abs{t_i^*}>M\}}\\ \leq \sqrt{n^{-1}\sum_{i=1}^n(\hat{t}_i^*-\hat{\lambda}_i)^2} + (\max_{1\leq i\leq n}\abs{\hat{\lambda}_i}+2\sigma +\omicron{(1)})\sqrt{\frac{\abs{i:1\leq i\leq n, \abs{t_i^*}>M}}{n}}\\ \leq \sqrt{n^{-1}\sum_{i=1}^n(\hat{t}_i^*-\hat{\lambda}_i)^2} + (\max_{1\leq i\leq n}\abs{\hat{\lambda}_i}+2\sigma +\omicron{(1)})\sqrt{\frac{\abs{i:1\leq i\leq n, \abs{t_i^*-\hat{\lambda}_i}>M/2}}{n}}
   \end{split}
   \end{equation}

   The first term in the above inequality goes to 0 almost surely, as we saw in part (1), while the second term can be made arbitrarily small from the bound in \eqref{eq:tightness}. So if we choose $M$ large enough, then eventually almost surely we have
   \begin{equation}\label{eq:2nd_mom}
   n^{-1}\sum_{i=1}^n(t_i^*)^2\mathbb{I}\{\abs{t_i^*}>M\}\leq \epsilon.
   \end{equation}
   
   Combining the bound in \eqref{eq:2nd_mom}, since $\epsilon$ was arbitrary, with the fact that $\nu_n\xrightarrow{\mathcal{D}}H$ and $n^{-1}\sum_{i=1}^n\delta_{\lambda_i}\xrightarrow{\mathcal{D}}H$ we see that $$n^{-1}\sum_{i=1}^n(t_i^*-\lambda_i)^2\xrightarrow{a.s.}0.$$

    
\end{enumerate}
\end{proof}

\begin{proof}[Proof of Proposition \ref{prop:unknown_sigma}]

\begin{enumerate}
    \item If $\hat{\sigma}<\sigma,$ then taking $t_i$ to be the $(i-1) / p$ quantile of $H\boxplus \rho_{sc;\sigma^2-\hat{\sigma}^2}$ (which is the additive free-convolution of $H$ with a semicircular distribution with variance $\sigma^2-\hat{\sigma}^2$) gives that the empirical distribution $n^{-1}\sum_{i=1}^n\hat{t}_i$ converges weakly almost surely to $$H\boxplus \rho_{sc;\sigma^2-\hat{\sigma}^2}\boxplus \rho_{sc;\hat{\sigma}^2}=H\boxplus \rho_{sc;\sigma^2}.$$ As a consequence, the Wasserstein 2-distance of $n^{-1}\sum_{i=1}^n\hat{t}_i$ and $n^{-1}\sum_{i=1}^n\hat{\lambda}_i$ converges almost surely to 0, so $$n^{-1}\sum_{i=1}^n(\hat{t}_i-\hat{\lambda}_i)^2\xrightarrow{a.s.}0.$$
    This shows that almost surely $\limsup{R_n}(\hat{\sigma})=0.$
    \item Fix $\hat{\sigma}>\sigma$ and consider the event $\mathcal{A}=\{\liminf{R_{\hat{\sigma}}}=0\}.$ Assume that $A$ has positive probability. Then, for an $\omega\in \mathcal{A}$ there exists a sequence $n_k\uparrow\infty$ such that $R_{n_k}(\hat{\sigma};\omega)\rightarrow 0.$
    
    If we denote by $\nu_n=n^{-1}\sum_{i=1}^n\delta_{t_i^*(\omega)}$ the probability measure that corresponds to the solution to the optimization problem in Algorithm \ref{algo:H_rec}, then we know that $\nu_{n_k}$ is tight sequence of probability measures, as in the proof of Theorem \ref{thm:recovery1}. 
    We conclude that there exists a subsequence of $\{\nu_{n_k}\}_{k\geq 1}$ that converges weakly to a probability measure $\Tilde{H}.$ Then, we must have, due to the fact that $R_{n_k;\omega}(\hat{\sigma})\rightarrow 0,$ $$\Tilde{H}\boxplus \rho_{sc;\hat{\sigma}^2}=H\boxplus\rho_{sc;\sigma^2}.$$
\end{enumerate}

Since $\Tilde{H}\boxplus\rho_{sc;\hat{\sigma}^2}=\Tilde{H}\boxplus\rho_{sc;\hat{\sigma}^2\boxplus\sigma^2}\boxplus\rho_{sc;\sigma^2},$ we deduce that $H=\Tilde{H}\boxplus\rho_{sc;\hat{\sigma}^2-\sigma^2}.$ This is a contradiction, so $\mathbb{P}(\mathcal{A})=0$ and the proof is completed.
\end{proof}



\subsection{Proofs for Section \ref{sec:asympt_expand}}
\begin{proof}[Proof of Proposition \ref{prop:large_noise}]
\begin{enumerate}
    \item For $n$ fixed and $\sigma\rightarrow\infty,$ $\sigma^{-1}\hat{A}_n=\sigma^{-1}A_n+n^{-1/2}Z_n\rightarrow n^{-1/2}Z_n$ and the eigenvectors of $\hat{A}_n$ converge to the eigenvectors of $Z_n$ which are uniformly distributed with respect to the Haar measure. Let us denote by $z_1,\cdots,z_n$ the $l_2-$normalized eigenvectors of $Z.$  We have $$\mathbb{E}\left[(z_i^\intercal w_j)^2|A\right]=\frac{1}{n}\Rightarrow{\mathbb{E}}\left[z_i^\intercal Az_i|A\right]=\frac{tr\left(A\right)}{n}.$$ Applying Theorem 5.1.4 in \cite{vershynin2018high} for the function $f(X)=X^\intercal A X$ (which is Lipschitz on the unit sphere with Lipschitz constant 2$\Norm{A}_{op}$) we have that there exists a constant $C>0$ such that for any $\epsilon>0$ and any $i:$
    \begin{equation}
    \mathbb{P}\left(\abs{z_i^\intercal Az_i-n^{-1}tr\left(A\right)}>\epsilon\right)\leq 2\exp{\left(-\frac{cn\epsilon^2}{\Norm{A}_{op}^2}\right)}\leq 2\exp{\left(-\frac{cn\epsilon^2}{h_2^2}\right)}.
    \end{equation}
    
    Using the union bound we have  \begin{equation}
    \mathbb{P}\left(\max_{1\leq i \leq n}\abs{z_i^\intercal Az_i-n^{-1}tr\left(A\right)}>\epsilon\right)\leq 2n\exp{\left(-\frac{cn\epsilon^2}{h_2^2}\right)}.
    \end{equation}
    
    The Borel-Cantelli lemma implies that almost surely we eventually have $$\max_{1\leq i\leq n}\abs{z_i^\intercal Az_i-n^{-1}tr\left(A\right)}\leq\epsilon.$$ Since $\epsilon$ was arbitrary we have $$\max_{1\leq i\leq n}\abs{z_i^\intercal Az_i-n^{-1}tr\left(A\right)}\xrightarrow{a.s}0.$$ To finish the proof we see that $$n^{-1}tr(A)=n^{-1}\sum_{1\leq i\leq n}h(\lambda_i)\xrightarrow{a.s.}\int h(t)dH(t).$$
    
    \item We have from Theorem \ref{thm:opt_shrinkage_der} that for $\abs{x}<2$ (such that $\sigma x$ eventually lies in the support of $H\boxplus \rho_{sc;\sigma^2}$):
    \begin{equation}
        f_h^*(\sigma x)=\int \frac{h(t)dH(t)}{(\sigma^{-1}t-x-\sigma u(\sigma x))^2+\sigma^2v(\sigma x)^2}.
    \end{equation}
    Firstly, we will show that $(x+\sigma u(\sigma x))^2+\sigma^2v(\sigma x)^2\rightarrow 1.$ 
    
    Let $\alpha(z;\sigma)=\sigma m(\sigma z),\Tilde{\alpha}(x)=\lim_{\epsilon\downarrow 0}\alpha(x+i\epsilon).$ Then, we have $$\Tilde{\alpha}(x)=\int \frac{dH(t)}{\sigma^{-1}t-x-\Tilde{\alpha}(x)},$$ so $\Tilde{\alpha(x)}$ converges, as $\sigma\rightarrow \infty,$ to the solution of the equation $\beta=-1/(x+\beta)$ that lies on the upper half plane. Notice that $\beta$ is the limit of the Stieltjes transform of the semicircular distribution with variance $1$ on the real axis. So $$\beta^2+x\beta+1=0\rightarrow (\beta +x)^2-(\beta +x)x+1=0\rightarrow \abs{\beta + x}=1.$$ This is exactly what we claimed, in particular that $$(x+\sigma u(\sigma x))^2+\sigma^2v(\sigma x)^2\xrightarrow {\sigma\rightarrow\infty}1.$$
    
    The rest will follow from Scheff\'e's lemma. In particular, we have for any $x\in\mathbb{R}$ from the equation that defines $m_{\hat{\mu}}:$ $$\int \frac{dH(t)}{(\sigma^{-1}t-x-\sigma u(\sigma x))^2+\sigma^2v(\sigma x)^2}=1.$$ As a consequence, for each $x$ the measure $$\frac{dH(t)}{(\sigma^{-1}t-x-\sigma u(\sigma x))^2+\sigma^2v(\sigma x)^2}$$ is a probability measure that converges weakly to $H$ as $\sigma\downarrow 0.$ The proof is completed. 
    
\end{enumerate}
\end{proof}

\begin{proof}[Proof of Proposition \ref{prop:small_noise}]

\begin{enumerate}
    \item We have \begin{equation}\begin{split}
    \hat{w}_i^\intercal h(A)\hat{w}_i=w_i^\intercal h(A)w_i+2\sigma \frac{d\hat{w}_i}{d\sigma}|_{\sigma=0}h(A)w_i+\omicron{(\sigma)}\\=w_i^\intercal h(A)w_i+2h(\lambda_i)\sigma w_i^\intercal\frac{d\hat{w}_i}{d\sigma}|_{\sigma=0}+\omicron{(\sigma)}.
    \end{split}
    \end{equation}

    Since $\hat{w}_i^\intercal \hat{w}_i=1,$ we get $$w_i^\intercal\frac{d\hat{w}_i}{d\sigma}|_{\sigma=0}=0.$$ We conclude that $$\lim_{\sigma\rightarrow 0}\max_{1\leq i\leq n}\frac{\abs{\hat{w}_i^\intercal h(A)\hat{w}_i-h(\lambda_i)}}{\sigma}=0.$$
    
    \item From the Hadamard variation formulas for the eigenvalues and eigenvectors of $\hat{A}$ (\cite{yau_dynamical}) we know that:
    
    $$\frac{d\hat{\lambda}_i}{d\sigma}|_{\sigma=0}=w_i^\intercal\frac{Z}{\sqrt{n}}w_i$$
    $$\frac{d\hat{w}_i}{d\sigma}|_{\sigma=0}=\sum_{j\neq i}\frac{\frac{w_i^\intercal Zw_j}{\sqrt{n}}}{\lambda_i-\lambda_j}w_j$$
    
    Using these we have for $n$ fixed and $\sigma\rightarrow 0$ we have under the convention $h'(x)=(h(x)-h(y))/(x-y)$ for $x=y:$ \begin{equation}\begin{split}
    &h(\hat{A}) = \sum_{i=1}^nh(\hat{\lambda_i})\hat{w}_i\hat{w}_i^\intercal =\sum_{i=1}^nh\left(\lambda_i+\sigma w_i^\intercal \frac{Z}{\sqrt{n}}w_i\right)\hat{w}_i\hat{w}_i^\intercal+\omicron{(\sigma)}\\
    &=\sum_{i=1}^n\left[h(\lambda_i)+\sigma w_i^\intercal \frac{Z}{\sqrt{n}}w_ih'(\lambda_i)\right]\hat{w}_i\hat{w}_i^\intercal+\omicron{(\sigma)}\\
    &=\sum_{i=1}^n\left[h(\lambda_i)+\sigma \frac{w_i^\intercal Z w_i}{\sqrt{n}}h'(\lambda_i)\right]\left[w_iw_i^\intercal +\sigma \sum_{j\neq i}\frac{\frac{w_i^\intercal Zw_j}{\sqrt{n}}}{\lambda_i-\lambda_j}(w_iw_j^\intercal +w_jw_i^\intercal)\right]+\omicron{(\sigma)}\\
    &=\sum_{i=1}^nh(\lambda_i)w_iw_i^\intercal+\frac{\sigma}{2} \sum_{i,j=1}^n\frac{w_i^\intercal Zw_j}{\sqrt{n}}\frac{h(\lambda_i)-h(\lambda_j)}{\lambda_i-\lambda_j}(w_iw_j^\intercal +w_jw_i^\intercal)+\omicron{(\sigma)}\\
    &=h(A)+\frac{\sigma}{2}\sum_{i,j=1}^n\frac{w_i^\intercal Zw_j}{\sqrt{n}}\frac{h(\lambda_i)-h(\lambda_j)}{\lambda_i-\lambda_j}(w_iw_j^\intercal +w_jw_i^\intercal)+\omicron{(\sigma)}.
    \end{split}
    \end{equation}
    
    This gives us $$\lim_{\sigma \rightarrow 0 }\frac{\Norm{h(\hat{A})-h(A)}_F^2}{n\sigma^2}=\frac{1}{n}\sum_{i,j=1}^n\left( \frac{w_i^\intercal Zw_j}{\sqrt{n}}\right)^2\frac{(h(\lambda_i)-h(\lambda_j))^2}{(\lambda_i-\lambda_j)}.$$ Due to the rotational invariance of $Z$ we can assume that $u_i$ is the $i$-th standard basis vector.
    We get \begin{equation}
        \lim_{\sigma\rightarrow 0}\frac{\Norm{h(\hat{A})-h(A)}_F^2}{n\sigma^2}=n^{-2}\sum_{i,j=1}^nZ_{ij}^2\left(\frac{h(\lambda_i)-h(\lambda_j)}{\lambda_i-\lambda_j}\right)^2.
    \end{equation}
    Writing $$m_{ij}=\left(\frac{h(\lambda_i)-h(\lambda_j)}{\lambda_i-\lambda_j}\right)^2,$$ we know that $m_{ij}\leq \Norm{h'}_{\infty}^2$ and $$n^{-2}\sum_{i,j=1}^nm_{ij}\xrightarrow{a.s.}\iint \frac{(h(t)-h(s))^2}{(t-s)^2}dH(t)dH(s).$$
    
    In addition, $$\mathbb{E}\left[n^{-2}\sum _{i,j=1}^nZ_{ij}^2\frac{(h(\lambda_i)-h(\lambda_j)^2}{(\lambda_i-\lambda_j)^2}|m_{ij},1\leq i,j\leq n\right]=n^{-2}\sum_{i,j=1}^nm_{ij}^2+n^{-2}\sum_{i=1}^nm_{ii}^2,$$
    so \begin{equation}\label{eq:mij_limit}\begin{split}&\mathbb{E}\left[n^{-2}\sum _{i,j=1}^nZ_{ij}^2\frac{(h(\lambda_i)-h(\lambda_j)^2}{(\lambda_i-\lambda_j)^2}|m_{ij},1\leq i,j\leq n\right]\\ 
    &\xrightarrow{a.s.}\iint \frac{(h(t)-h(s))^2}{(t-s)^2}dH(t)dH(s)
    \end{split}
    \end{equation}
    
    Finally, we have $$\Var{n^{-2}\sum_{i,j=1}^nm_{ij}Z_{ij}^2|m_{ij},1\leq{i,j}\leq n}=\mathcal{O}(n^{-2}),$$ so we get $$n^{-2}\sum_{i,j=1}^nm_{ij}Z_{ij}^2-\mathbb{E}\left[n^{-2}\sum _{i,j=1}^nZ_{ij}^2\frac{(h(\lambda_i)-h(\lambda_j)^2}{(\lambda_i-\lambda_j)^2}|m_{ij},1\leq i,j\leq n\right]\xrightarrow{a.s.}0.$$
    
    We deduce from \eqref{eq:mij_limit} that $$n^{-2}\sum_{i,j=1}^nm_{ij}Z_{ij}^2\xrightarrow{a.s.}\iint \frac{(h(t)-h(s))^2}{(t-s)^2}dH(t)dH(s).$$

\end{enumerate}
\end{proof}

\bibliographystyle{plainnat}
\bibliography{final}

\begin{thebibliography}{24}
\providecommand{\natexlab}[1]{#1}
\providecommand{\url}[1]{\texttt{#1}}
\expandafter\ifx\csname urlstyle\endcsname\relax
  \providecommand{\doi}[1]{doi: #1}\else
  \providecommand{\doi}{doi: \begingroup \urlstyle{rm}\Url}\fi

\bibitem[Berger et~al.(2020)Berger, Sun, Song, et~al.]{berger2020bayesian}
James~O Berger, Dongchu Sun, Chengyuan Song, et~al.
\newblock Bayesian analysis of the covariance matrix of a multivariate normal
  distribution with a new class of priors.
\newblock \emph{Annals of Statistics}, 48\penalty0 (4):\penalty0 2381--2403,
  2020.

\bibitem[Biane(1997)]{semi_free}
Philippe Biane.
\newblock On the free convolution with a semi-circular distribution.
\newblock \emph{Indiana University Mathematics Journal}, pages 705--718, 1997.

\bibitem[Bun et~al.(2016)Bun, Allez, Bouchaud, and Potters]{bouchaudrotational}
Jo{\"e}l Bun, Romain Allez, Jean-Philippe Bouchaud, and Marc Potters.
\newblock Rotational invariant estimator for general noisy matrices.
\newblock \emph{IEEE Transactions on Information Theory}, 62\penalty0
  (12):\penalty0 7475--7490, 2016.

\bibitem[Donoho and Gavish(2013)]{donoho_gavish_smd}
David~L Donoho and Matan Gavish.
\newblock The optimal hard threshold for singular values is $4/\sqrt{3}$, 2013.

\bibitem[Donoho et~al.(2018)Donoho, Gavish, and
  Johnstone]{donoho_gavish_johnstone}
David~L Donoho, Matan Gavish, and Iain~M Johnstone.
\newblock Optimal shrinkage of eigenvalues in the spiked covariance model.
\newblock \emph{Annals of statistics}, 46\penalty0 (4):\penalty0 1742, 2018.

\bibitem[El~Karoui et~al.(2008)]{karoui_spectrum}
Noureddine El~Karoui et~al.
\newblock Spectrum estimation for large dimensional covariance matrices using
  random matrix theory.
\newblock \emph{The Annals of Statistics}, 36\penalty0 (6):\penalty0
  2757--2790, 2008.

\bibitem[Erd{\H{o}}s and Yau(2017)]{yau_dynamical}
L{\'a}szl{\'o} Erd{\H{o}}s and Horng-Tzer Yau.
\newblock \emph{A dynamical approach to random matrix theory}, volume~28.
\newblock American Mathematical Soc., 2017.

\bibitem[Etter and Ying(2020)]{lexing_opaug}
Philip~A Etter and Lexing Ying.
\newblock Operator augmentation for noisy elliptic systems.
\newblock \emph{arXiv preprint arXiv:2010.09656}, 2020.

\bibitem[James and Stein(1992)]{james_stein}
William James and Charles Stein.
\newblock Estimation with quadratic loss.
\newblock In \emph{Breakthroughs in statistics}, pages 443--460. Springer,
  1992.

\bibitem[Kong et~al.(2017)Kong, Valiant, et~al.]{kong2017spectrum}
Weihao Kong, Gregory Valiant, et~al.
\newblock Spectrum estimation from samples.
\newblock \emph{Annals of Statistics}, 45\penalty0 (5):\penalty0 2218--2247,
  2017.

\bibitem[Ledoit and P{\'e}ch{\'e}(2011)]{ledoit2011eigenvectors}
Olivier Ledoit and Sandrine P{\'e}ch{\'e}.
\newblock Eigenvectors of some large sample covariance matrix ensembles.
\newblock \emph{Probability Theory and Related Fields}, 151\penalty0
  (1):\penalty0 233--264, 2011.

\bibitem[Ledoit and Wolf(2004)]{ledoit_linear}
Olivier Ledoit and Michael Wolf.
\newblock A well-conditioned estimator for large-dimensional covariance
  matrices.
\newblock \emph{Journal of multivariate analysis}, 88\penalty0 (2):\penalty0
  365--411, 2004.

\bibitem[Ledoit and Wolf(2015)]{ledoit2015spectrum}
Olivier Ledoit and Michael Wolf.
\newblock Spectrum estimation: A unified framework for covariance matrix
  estimation and pca in large dimensions.
\newblock \emph{Journal of Multivariate Analysis}, 139:\penalty0 360--384,
  2015.

\bibitem[Ledoit et~al.(2012)Ledoit, Wolf, et~al.]{ledoit2012nonlinear}
Olivier Ledoit, Michael Wolf, et~al.
\newblock Nonlinear shrinkage estimation of large-dimensional covariance
  matrices.
\newblock \emph{The Annals of Statistics}, 40\penalty0 (2):\penalty0
  1024--1060, 2012.

\bibitem[Ledoit et~al.(2020)Ledoit, Wolf, et~al.]{ledoit2020analytical}
Olivier Ledoit, Michael Wolf, et~al.
\newblock Analytical nonlinear shrinkage of large-dimensional covariance
  matrices.
\newblock \emph{Annals of Statistics}, 48\penalty0 (5):\penalty0 3043--3065,
  2020.

\bibitem[Lolas(2020)]{lolas2020regularization}
Panagiotis Lolas.
\newblock Regularization in high-dimensional regression and classification via
  random matrix theory.
\newblock \emph{arXiv preprint arXiv:2003.13723}, 2020.

\bibitem[Mingo and Speicher(2017)]{mingo2017free}
James~A Mingo and Roland Speicher.
\newblock \emph{Free probability and random matrices}, volume~35.
\newblock Springer, 2017.

\bibitem[Olver and Nadakuditi(2012)]{rao_numerical}
Sheehan Olver and Raj~Rao Nadakuditi.
\newblock Numerical computation of convolutions in free probability theory.
\newblock \emph{arXiv preprint arXiv:1203.1958}, 2012.

\bibitem[Potters and Bouchaud(2020)]{bouchaud_book}
Marc Potters and Jean-Philippe Bouchaud.
\newblock \emph{A First Course in Random Matrix Theory: For Physicists,
  Engineers and Data Scientists}.
\newblock Cambridge University Press, 2020.

\bibitem[Rao and Edelman(2008)]{rao_edelman}
N~Raj Rao and Alan Edelman.
\newblock The polynomial method for random matrices.
\newblock \emph{Foundations of Computational Mathematics}, 8\penalty0
  (6):\penalty0 649--702, 2008.

\bibitem[Tao(2012)]{tao2012topics}
Terence Tao.
\newblock \emph{Topics in random matrix theory}, volume 132.
\newblock American Mathematical Soc., 2012.

\bibitem[Vershynin(2018)]{vershynin2018high}
Roman Vershynin.
\newblock \emph{High-dimensional probability: An introduction with applications
  in data science}, volume~47.
\newblock Cambridge university press, 2018.

\bibitem[Wigner(1958)]{wigner_original}
Eugene~P. Wigner.
\newblock On the distribution of the roots of certain symmetric matrices.
\newblock \emph{Annals of Mathematics}, 67\penalty0 (2):\penalty0 325--327,
  1958.
\newblock ISSN 0003486X.
\newblock URL \url{http://www.jstor.org/stable/1970008}.

\bibitem[Yang and Berger(1994)]{bergerprior}
Ruoyong Yang and James~O Berger.
\newblock Estimation of a covariance matrix using the reference prior.
\newblock \emph{The Annals of Statistics}, pages 1195--1211, 1994.

\end{thebibliography}

\end{document}